\newtheorem{theorem}{Theorem}[section]
\newtheorem{corollary}{Corollary}
\newtheorem{lemma}[theorem]{Lemma}
\newtheorem{proposition}{Proposition}
\theoremstyle{definition}
\newtheorem{definition}[theorem]{Definition}
\newtheorem{remark}{Remark}
\newtheorem{TTheorem}{Theorem}[section]
\newcommand \p {\partial}
\newcommand \dist {\mathrm{dist}}
\newcommand \com {\mathrm{cof}}
\newcommand \w {\mathrm{w}}
\newcommand \R {\mathbb{R}}
\renewcommand \L {\mathrm{L}}
\renewcommand \H {\mathrm{H}}
\newcommand \I {\mathrm{I}}
\newcommand \Id {\mathrm{Id}}
\renewcommand \d {\mathrm{d}}
\renewcommand \div {\mathrm{div}}
\renewcommand \det {\mathrm{det}}
\newcommand \bphi {\boldsymbol{\mathrm{\phi}}}
\title[Stabilization of a fluid-solid system: Part I]
      {Stabilization of a fluid-solid system, by the deformation of the self-propelled solid.\\ Part I: The linearized system.}
\author[S\'ebastien Court]{}
\subjclass{Primary: 93C20, 35Q30, 76D05, 76D07, 74F10, 93C05, 93B52, 93D15; Secondary: 74A99, 35Q74.}
 \keywords{Exponential stabilization, Navier-Stokes equations, Fluid-structure interactions, Mechanics of deformable solids, boundary feedback stabilization.}
\email{sebastien.court@math.univ-toulouse.fr}
\thanks{This work is partially supported by the ANR-project CISIFS, 09-BLAN-0213-03.}
\begin{document}
\maketitle

\centerline{\scshape S\'ebastien Court }
\medskip
{\footnotesize
 \centerline{Institut de Math\'ematiques de Toulouse}
   \centerline{Universit\'e Paul Sabatier}
   \centerline{118 route de Narbonne}
   \centerline{31062 Toulouse Cedex 9, FRANCE}
}

\bigskip

 \centerline{(Communicated by the associate editor name)}

\begin{abstract}
This paper is the first part of a work which consists in proving the stabilization to zero of a fluid-solid system, in dimension 2 and 3. The considered system couples a deformable solid and a viscous incompressible fluid which satisfies the incompressible Navier-Stokes equations. By deforming itself, the solid can interact with the environing fluid and then move itself. The control function represents nothing else than the deformation of the solid in its own frame of reference. We there prove that the velocities of the linearized system are stabilizable to zero with an arbitrary exponential decay rate, by a boundary deformation velocity which can be chosen in the form of a feedback operator. We then show that this boundary feedback operator can be obtained from an internal deformation of the solid which satisfies the linearized physical constraints that a {\it self-propelled} solid has to satisfy.
\end{abstract}

\section{Introduction}
In this two-part work we are interested in the way a solid immersed in a viscous incompressible fluid (in dimension 2 or 3) can deform itself and then interact with the environing fluid in order to stabilize exponentially to zero the velocity of the fluid and also its own velocities. The domain occupied by the solid at time $t$ is denoted by $\mathcal{S}(t)$. We assume that $\mathcal{S}(t) \subset \mathcal{O}$, where $\mathcal{O}$ is a bounded smooth domain. The fluid surrounding the solid occupies the domain $\mathcal{O} \setminus \overline{\mathcal{S}(t)} = \mathcal{F}(t)$.
%

\vspace*{10pt}
\begin{figure}[!h]
\begin{center}
\includegraphics[scale = 0.25, trim = 3.5cm 1.8cm 1.5cm 1cm, clip]{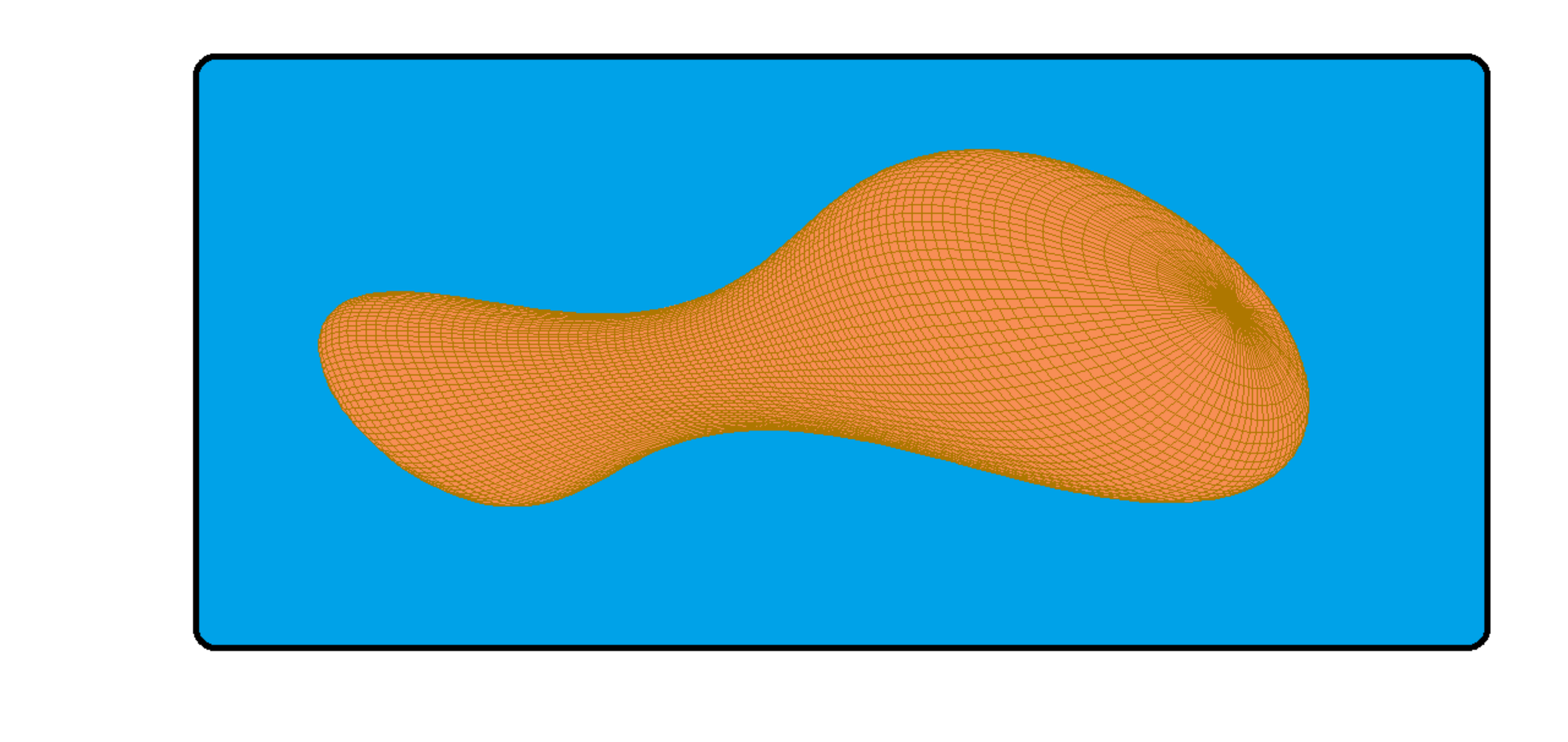}%
\end{center}
\end{figure}

\begin{picture}(0,0)(-100,-90)
\put(0,0){\makebox(0,0)[lb]{\smash{{\color[rgb]{0,0,0}\begin{huge}$\large{\mathcal{F}(t)}$\end{huge}}}}}
\put(80,-30){\makebox(0,0)[lb]{\smash{{\color[rgb]{0,0,0}\begin{huge}$\mathcal{S}(t)$\end{huge}}}}}
\put(-20,25){\makebox(0,0)[lb]{\smash{{\color[rgb]{0,0,0}\begin{large}$\mathcal{O} = \mathcal{F}(t) \cup \overline{S(t)} \subset \R^2 \text{ or } \R^3.$\end{large}}}}}
\end{picture}


\FloatBarrier

\subsection{Presentation of the model}
The movement of the solid in the inertial frame of reference is described through the time by a Lagrangian mapping denoted by $X_{\mathcal{S}}$, so we have
\begin{eqnarray*}
\mathcal{S}(t) & = & X_{\mathcal{S}}(\mathcal{S}(0),t), \quad t \geq 0.
\end{eqnarray*}
The mapping $X_{\mathcal{S}}(\cdot,t)$ can be decomposed as follows
\begin{eqnarray*}
X_{\mathcal{S}}(y,t) & = & h(t) + \mathbf{R}(t)X^{\ast}(y,t), \quad y \in \mathcal{S}(0),
\end{eqnarray*}
where the vector $h(t)$ describes the position of the center of mass and $\mathbf{R}(t)$ is the rotation associated with the angular velocity of the solid. In dimension 3 the angular velocity is a vector field whereas it is only a scalar function in dimension 2. However, $\R^2$ can be immersed in $\R^3$ and this scalar function can be read on the third component of a 3D-vector. More generally in this work, since all the calculations made in dimension 3 make sense in dimension 2, we will consider only vector fields of $\R^3$ and matrix fields of $\R^{3\times3}$. For instance, $\omega$ and $\mathbf{R}$ are related to each other through the following Cauchy problem
\begin{eqnarray*}
\left\{\begin{array} {ccccc}
\displaystyle \frac{\d \mathbf{R}}{\d t} & = & \mathbb{S}\left( \omega\right) \mathbf{R} \\
\mathbf{R}(0) & = & \I_{\R^3}
\end{array} \right.,
\quad \text{with }
\mathbb{S}(\omega) = \left(
\begin{matrix}
0 & -\omega_3 & \omega_2 \\
\omega_3 & 0 & -\omega_1 \\
-\omega_2 & \omega_1 & 0
\end{matrix} \right), \label{rotation}
\end{eqnarray*}
\footnote{The notation $\I_{\R^3}$ will represent the identity matrix of $\R^{3\times3}$.}where in dimension 2 we have $\omega = \omega_3$ and $\omega_1 = \omega_2 = 0$.\\
The couple $(h(t),\mathbf{R}(t))$ describes the position of the solid and is unknown, whereas the mapping $X^{\ast}(\cdot,t)$ can be imposed. This latter represents the deformation of the solid in its own frame of reference and is considered as the control function on which we can act physically. When this Lagrangian mapping $X^{\ast}(\cdot,t)$ is invertible, we can link to it an Eulerian velocity $w^{\ast}$ through the following Cauchy problem
\begin{eqnarray*}
\frac{\p X^{\ast}}{\p t}(y,t) = w^{\ast}(X^{\ast}(y,t),t), \quad X^{\ast}(y,0) = y-h(0), \quad y\in \mathcal{S}(0). \label{cauchystar}
\end{eqnarray*}
Without loss of generality, we assume that $h(0) = 0$, for a sake of simplicity. If $Y^{\ast}(\cdot,t)$ denotes the inverse of $X^{\ast}(\cdot,t)$, we have
\begin{eqnarray*}
 w^{\ast}(x^{\ast},t) & = & \frac{\p X^{\ast}}{\p t}(Y^{\ast}(x^{\ast},t),t), \quad x^{\ast} \in \mathcal{S}^{\ast}(t)=X^{\ast}(\mathcal{S}(0),t). \label{wsuperstar}
\end{eqnarray*}
The fluid flow is described by its velocity $u$ and its pressure $p$ which are assumed to satisfy the incompressible Navier-Stokes equations. For $X^{\ast}$ satisfying a set of hypotheses given further, the system which governs the dynamics between the fluid and the solid is the following
\begin{eqnarray}
\frac{\p u}{\p t} - \nu \Delta u + (u\cdot \nabla )u + \nabla p  =  0 , & \quad &
x \in \mathcal{F} (t),\quad t\in (0,\infty), \label{prems} \\
\div \ u   =  0 ,  & \quad &  x\in \mathcal{F} (t), \quad t\in (0,\infty), \label{deus}
\end{eqnarray}
\begin{eqnarray}
u  =  0 , & \quad & x \in \p \mathcal{O} ,\quad t\in (0,\infty),  \label{trois} \\
u  =  h'(t) +  \omega (t)\wedge(x-h(t)) + w(x,t) , & \quad &  x\in \p \mathcal{S}(t),\quad t\in
(0,\infty), \label{quatre}
\end{eqnarray}
\begin{eqnarray}
M h''(t)  =  - \int_{\p \mathcal{S}(t)} \sigma(u,p) n \d \Gamma  , & \quad & t\in (0,\infty), \label{cinq} \\
\left(I\omega\right)' (t)  = - \int_{\p \mathcal{S}(t)} (x-h(t))\wedge  \sigma(u,p) n \d \Gamma, & \quad & t\in(0,\infty),
\label{six}
\end{eqnarray}
\begin{eqnarray}
u(y,0)  =  u_0 (y), \  y\in \mathcal{F}(0) ,\quad h'(0)=h_1 \in \R^3 ,\quad \omega(0) = \omega_0 \in \R^3 ,
\end{eqnarray}
where
\begin{eqnarray}
\mathcal{S}(t) = h(t) + \mathbf{R}(t)X^{\ast}(\mathcal{S}(0),t), & \quad & \mathcal{F}(t) = \mathcal{O} \setminus \overline{\mathcal{S}(t)},
\end{eqnarray}
and where the velocity $w$ is defined by the following change of frame
\begin{eqnarray}
w(x,t) & = & \mathbf{R}(t)\ w^{\ast}\left(\mathbf{R}(t)^{T}(x-h(t)), t\right), \quad x\in \mathcal{S}(t). \label{wwwstar} \label{ders}
\end{eqnarray}
The symbol $\wedge$ denotes the cross product in $\R^3$. The linear map $\omega \wedge \cdot$ can be represented by the matrix $\mathbb{S}(\omega)$. In equations \eqref{cinq} and \eqref{six}, the mass of the solid $M$ is constant, whereas the inertia moment depends {\it a priori} on time. In dimension 2 the inertia moment is a scalar function which can be read on the inertia matrix given by
\begin{eqnarray*}
I(t) & = & \left(\int_{\mathcal{S}(t)} \rho_{\mathcal{S}}(x,t) \left|x-h(t)\right|^2 \d x\right)\I_{\R^3}.
\end{eqnarray*}
In dimension 3 it is a tensor written as
\begin{eqnarray*}
I(t) & = & \int_{\mathcal{S}(t)} \rho_{\mathcal{S}}(x,t) \left( |x-h(t)|^2 \I_{\R^3} - (x-h(t))\otimes (x-h(t)) \right) \d x.
\end{eqnarray*}
The quantity $\rho_{\mathcal{S}}$ denotes the density of the solid, and obeys the principle of mass conservation
\begin{eqnarray*}
\rho_{\mathcal{S}}(X_{\mathcal{S}}(y,t),t) & = & \frac{\rho_{\mathcal{S}}(y,0)}{\det \left(\nabla X_{\mathcal{S}}(y,t)\right)},
\quad y \in \mathcal{S}(0),
\end{eqnarray*}
where $\nabla X_{\mathcal{S}}$ denotes the Jacobian matrix of the mapping $X_{\mathcal{S}}$.
For a sake of simplicity we assume that the solid is homogeneous at time $t=0$:
\begin{eqnarray*}
\rho_{\mathcal{S}}(y,0) & = & \rho_{\mathcal{S}} > 0.
\end{eqnarray*}
In system \eqref{prems}--\eqref{ders}, $\nu$ is the kinematic viscosity of the fluid and the normalized vector $n$ is the normal at $\p \mathcal{S}(t)$ exterior to $\mathcal{F}(t)$. It is a coupled system between the incompressible Navier-Stokes equations and the differential equations \eqref{cinq}-\eqref{six} given by the Newton's laws. The coupling is in particular made in the fluid-structure interface, through the equality of velocities \eqref{quatre} and through the Cauchy stress tensor given by
\begin{eqnarray*}
\sigma(u,p)  =  2\nu D(u) - p \ \Id  =  \nu\left( \nabla u + \left(\nabla u\right)^{T} \right) - p \ \Id.
\end{eqnarray*}
Indeed, the Dirichlet condition \eqref{quatre} partially imposed by the deformation of the solid (through the velocity $w$) influences the behavior of the fluid whose the response is the quantity $\sigma(u,p)n$ in the fluid-solid interface. It represents the force that the fluid applies on the solid, and then it determines the global dynamics of the solid (through equations \eqref{cinq} et \eqref{six}) and thus its position.\\
The problem is the following: What is the deformation $X^{\ast}$ of the solid we have to impose in order to stabilize the environing fluid and thus induce a behavior of the fluid which stabilizes the velocities of the solid? We shall assume a set of hypotheses on the control function $X^{\ast}$, that we state as follows:
\begin{description}
\item[H1] For all $t\geq 0$, $X^{\ast}(\cdot,t)$ is a $C^{1}$-diffeomorphism from $\mathcal{S}(0)$ onto $ \mathcal{S}^{\ast}(t)$. \\
\item[H2] In order to respect the incompressibility condition given by \eqref{deus}, the volume of the whole solid has to be preserved through the time. That is equivalent to assume that
\begin{eqnarray}
 \int_{\p \mathcal{S}(0)} \frac{\p X^{\ast}}{\p t} \cdot \left( \com \nabla X^{\ast} \right)n\d \Gamma =  0, \label{const1}
\end{eqnarray}
\textcolor{black}{where $\com \mathbf{A}$ denotes the cofactor matrix associated with some matrix field $\mathbf{A}$. Let us remind that when this matrix is invertible the following property holds:
\begin{eqnarray*}
\com \mathbf{A}^T & = & (\det \mathbf{A})\mathbf{A}^{-1}.
\end{eqnarray*}
}
\item[H3] \textcolor{black}{The deformation of the solid does not modify its linear momentum}, which leads us to assume that
\begin{eqnarray}
\int_{\mathcal{S}(0)} \rho_{\mathcal{S}}(y,0) X^{\ast}(y,t) \d y & = & 0. \label{const2}
\end{eqnarray}
\item[H4] \textcolor{black}{The deformation of the solid does not modify its angular momentum}, which leads us to assume that
\begin{eqnarray}
\int_{\mathcal{S}(0)} \rho_{\mathcal{S}}(y,0) X^{\ast}(y,t)\wedge \frac{\p X^{\ast}}{\p t}(y,t) \d y & = & 0. \label{const3}
\end{eqnarray}
\end{description}
Imposing constraints \eqref{const2} and \eqref{const3} enables us to get the two following constraints on the undulatory velocity $w$
\begin{eqnarray}
\int_{\mathcal{S}(t)} \rho_{\mathcal{S}}(x,t)w(x,t)  \d y & = & 0, \label{const12} \\
\int_{\mathcal{S}(t)} \rho_{\mathcal{S}}(x,t)(x-h(t))\wedge w(x,t)  \d y & = & 0. \label{const13}
\end{eqnarray}
As equations \eqref{cinq} and \eqref{six} are written, the constraints \eqref{const12} and \eqref{const13} are implicitly satisfied in system \eqref{prems}--\eqref{ders}. Hypotheses {\bf H3} and {\bf H4} are made to guarantee the {\it self-propelled} nature of the motion of the solid, that means no other help than its own deformation enables it to interact and to move in the surrounding fluid.\\
The existence of global-in-time strong solutions for system \eqref{prems}--\eqref{ders} has been studied in \cite{SMSTT} in dimension 2 and more recently in \cite{Court} in dimension 3. In particular, this existence in dimension 3 is conditioned by the smallness of the data, namely the initial condition $(u_0,h_1,\omega_0)$ and the displacement of the solid $X^{\ast} - \Id_{\mathcal{S}}$\footnote{The notation $\Id_{\mathcal{S}}$ will represent the identity mapping of $\mathcal{S}$.} (in some Sobolev spaces).

\subsection{The linearized problem}
For the full nonlinear system \eqref{prems}--\eqref{ders}, the equations are written in the Eulerian configuration, and thus we are lead to think that the Eulerian velocity $w^{\ast}$ is the more suitable quantity to be chosen as a control function (instead of $X^{\ast}$). But such a mapping is defined on the domain $\mathcal{S}^{\ast}(t)$, which is itself defined by $X^{\ast}(\cdot,t)$. Moreover, the study of such a nonlinear system is based on the preliminary study of the corresponding linearized system which is
\begin{eqnarray}
\frac{\p U}{\p t}  - \div \ \sigma(U,P) = 0, & \quad &  \textrm{in $\mathcal{F}(0)\times (0,\infty)$}, \label{lin1} \\
\div \  U = 0, & \quad & \textrm{in $\mathcal{F}(0)\times (0,\infty)$},
\end{eqnarray}
\begin{eqnarray}
U  =  0 , & \quad & \textrm{in $\p \mathcal{O} \times (0,\infty)$}, \\
U  =  H'(t) + \Omega(t) \wedge y + \zeta(y,t) , & \quad & y \in \p \mathcal{S}(0) ,\quad t\in (0,\infty),
\end{eqnarray}
\begin{eqnarray}
M H''(t)  = - \int_{\p \mathcal{S}} \sigma(U,P) n \d \Gamma , & \quad & t\in (0,\infty), \label{lin5} \\
I_0\Omega'(t)  = -  \int_{\p \mathcal{S}} y \wedge \sigma(U,P) n \d \Gamma , & \quad &  t\in(0,\infty), \label{lin6}
\end{eqnarray}
\begin{eqnarray}
U(y,0)  =  u_0 (y), \  y \in \mathcal{F}(0), \quad H'(0)=h_1 \in \R^3 ,\quad \Omega(0) = \omega_0 \in \R^3, \label{lin10}
\end{eqnarray}
and where the more suitable control to be chosen is the function $\zeta$, related to the Lagrangian velocity $\displaystyle \frac{\p X^{\ast}}{\p t}$ by
\begin{eqnarray*}
\zeta & = & e^{\lambda t}\frac{\p X^{\ast}}{\p t}_{\left| \p \mathcal{S} \right.}.
\end{eqnarray*}
\textcolor{black}{In this system $I_0$ denotes the inertia matrix of the solid at time $t=0$.\\
}
Notice that the constraints \eqref{const1} and \eqref{const3} are nonlinear with respect to the mapping $X^{\ast}$. We linearize them when we consider the linear system \eqref{lin1}--\eqref{lin10}. For this linear system, the constraint induced by Hypothesis {\bf H1} can be relaxed, since we only consider mappings $X^{\ast}(\cdot ,t)$ continuous in time and such that $X^{\ast}(\cdot,0) = \Id_{\mathcal{S}}$. Thus the notion of {\it admissible control} for this linearized problem is made precise in Definition \ref{deflincontrol} \textcolor{black}{(see below)}.

\subsection{The main result and the strategy}
The main result of this first part is Theorem \ref{thstablinX}, which is equivalent to the following one:
\begin{TTheorem}
Assume that $\dist( \p \mathcal{O} ; \mathcal{S}(0)) > 0$. For all $(u_0,h_1,\omega_0)$ satisfying $u_0 \in \mathbf{H}^1(\mathcal{F}(0))$ and the following compatibility conditions
\begin{eqnarray*}
\left\{ \begin{array} {ll}
\div \ u_0  =  0 & \text{ in } \mathcal{F}(0), \\
u_0 = 0 & \text{ on } \p \mathcal{O}, \\
u_0 = h_1 + \omega_0 \wedge y & \text{ on } \p \mathcal{S}(0),
\end{array} \right.
\end{eqnarray*}
system \eqref{lin1}--\eqref{lin10} is stabilizable with an arbitrary exponential decay rate $\lambda > 0$, that is to say that for all $\lambda >0$ there exists a boundary control $\zeta \in \L^2(0,\infty;\mathbf{H}^{3/2}(\p \mathcal{S}))$ and a positive constant C\footnote{In the following the symbols $C$, $\tilde{C}$ or $\overline{C}$ will denote some generic positive constants which do not depend on time or on the unknowns.} depending only on $(u_0,h_1,\omega_0)$ such that for all $t \geq 0$ the solution $(U,H',\Omega)$ of system \eqref{lin1}--\eqref{lin10} satisfies
\begin{eqnarray*}
\| (U(\cdot,t),H'(t),\Omega(t) \|_{\mathbf{L}^2(\mathcal{F}(0))\times \R^3 \times \R^3} & \leq & C\exp(-\lambda t).
\end{eqnarray*}
\end{TTheorem}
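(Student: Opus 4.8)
The plan is to reduce the prescribed-rate stabilization to a stabilization-to-zero problem by an exponential shift, then to a finite-dimensional control problem by a spectral decomposition of the fluid-structure operator, the crux being a unique continuation property for the Stokes system.

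First I would perform the exponential change of unknowns $\tilde U=e^{\lambda t}U$, $\tilde P=e^{\lambda t}P$, $\tilde H'=e^{\lambda t}H'$, $\tilde\Omega=e^{\lambda t}\Omega$ in \eqref{lin1}--\eqref{lin10}. The requested bound $\|(U,H',\Omega)\|\leq Ce^{-\lambda t}$ then amounts to stabilizing the weighted unknowns to zero, the corresponding generator being shifted from $\mathcal{A}$ to $\mathcal{A}+\lambda\I$. The control $\zeta$ will be delivered as a state feedback; being tied to the physical Lagrangian velocity through $\frac{\p X^\ast}{\p t}=e^{-\lambda t}\zeta$, its own exponential decay also ensures that the deformation effort dies out, which is what makes the eventual $X^\ast$ admissible.

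Next I would recast the homogeneous, uncontrolled problem ($\zeta=0$) as an abstract Cauchy problem $\dot Z=\mathcal{A}Z$ on the finite-energy Hilbert space $\mathcal{H}$ of divergence-free fields $U\in\mathbf{L}^2(\mathcal{F}(0))$ matched to rigid velocities $(H',\Omega)\in\R^3\times\R^3$ through $U=H'+\Omega\wedge y$ on $\p\mathcal{S}(0)$ and $U=0$ on $\p\mathcal{O}$, the inner product being weighted by the mass $M$ and the inertia $I_0$. It is classical for this coupled Stokes–rigid operator that $\mathcal{A}$ is self-adjoint, dissipative, and has compact resolvent, hence generates an analytic semigroup whose spectrum is a discrete sequence of real eigenvalues $\mu_k\to-\infty$. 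The boundary control enters through an unbounded operator $\mathcal{B}$, handled by the standard lifting of the nonhomogeneous boundary condition, so that the weighted system reads $\dot Z=(\mathcal{A}+\lambda\I)Z+\mathcal{B}\zeta$. Because only finitely many $\mu_k$ satisfy $\mu_k\geq-\lambda$, the spectral projection $P_u$ onto these modes splits $\mathcal{H}=\mathcal{H}_u\oplus\mathcal{H}_s$, and on $\mathcal{H}_s$ the semigroup generated by $\mathcal{A}+\lambda\I$ already decays to zero; no control is needed there.

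Everything therefore reduces to stabilizing the finite-dimensional pair $(P_u(\mathcal{A}+\lambda\I),P_u\mathcal{B})$, and the hard part will be verifying its stabilizability through the Fattorini–Hautus criterion: one must show that no eigenfunction $\phi$ of $\mathcal{A}^\ast$ associated with an eigenvalue of real part $\geq-\lambda$ can satisfy $\mathcal{B}^\ast\phi=0$. Here $\mathcal{A}^\ast$ is a backward coupled Stokes–rigid operator and $\mathcal{B}^\ast\phi$ is the normal stress $\sigma(\phi,\pi)n$ of the adjoint state on $\p\mathcal{S}(0)$; since the adjoint eigenfunction already has rigid trace there, the condition $\mathcal{B}^\ast\phi=0$ furnishes vanishing Cauchy data on $\p\mathcal{S}(0)$, and a unique continuation theorem for the Stokes system — made available by the hypothesis $\dist(\p\mathcal{O};\mathcal{S}(0))>0$, which keeps the interface away from $\p\mathcal{O}$ — propagates the zero into $\mathcal{F}(0)$ and forces $\phi\equiv0$ together with its rigid components. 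A secondary technical point is the self-propelled and incompressibility bookkeeping: the admissible controls are constrained by $\int_{\p\mathcal{S}}\zeta\cdot n\,\d\Gamma=0$ and the linearized momentum conditions underlying \eqref{const12}--\eqref{const13}, so one must check that the pair stays stabilizable after restricting $\mathcal{B}$ to this subspace. Once stabilizability holds I would build the feedback on $\mathcal{H}_u$ by a finite-dimensional Riccati (or pole-placement) argument, set $\zeta=\mathcal{K}P_uZ$, and conclude: the finite-dimensional smooth eigenfunctions make $\zeta$ spatially regular enough to lie in $\mathbf{H}^{3/2}(\p\mathcal{S})$, its exponential decay makes it $\L^2$ in time, and undoing the change of unknowns yields $\|(U,H',\Omega)\|\leq Ce^{-\lambda t}$ with $C$ linear in $\|(u_0,h_1,\omega_0)\|$.
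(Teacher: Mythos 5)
Your overall architecture coincides with the paper's: exponential shift of unknowns, reformulation as an abstract evolution equation for a self-adjoint operator with compact resolvent generating an analytic semigroup, spectral projection onto the finitely many modes with real part above $-\lambda$, reduction to a finite-dimensional stabilization problem solved by pole placement or Riccati, and unique continuation for the Stokes system as the analytic crux. Where you genuinely diverge is the controllability step. The paper deliberately avoids the Fattorini--Hautus / $\mathcal{B}^\ast$ route: it proves approximate controllability directly from the definition, by testing the state system against the adjoint system \eqref{premsadj}--\eqref{dersadj} and integrating by parts to obtain $\int_0^T\int_{\p\mathcal{S}}\zeta\cdot\sigma(\phi,\psi)n\,\d\Gamma\,\d t=0$ for all admissible $\zeta$, and it then deduces exact controllability of the projected system from the density of the projection of the reachable set onto the finite-dimensional space $\mathbf{H}_u$. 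The paper even remarks that $\mathcal{B}_0^\ast\bphi$ is \emph{not} the normal stress of the adjoint state but $-\sigma(z,\pi)n$ for an auxiliary Stokes problem with source $(-A_0)\phi$, and that the operator-theoretic characterization "would lead to other difficulties" --- precisely the ones your proposal elides.

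The concrete gap is in your unique continuation step. First, because of the constraint $\int_{\p\mathcal{S}}\zeta\cdot n\,\d\Gamma=0$, the orthogonality condition only yields $\sigma(\phi,\psi)n=C(t)\,n$ on $\p\mathcal{S}$, which must be reduced to zero by renormalizing the pressure. Second, and more seriously, the adjoint eigenfunctions do \emph{not} have vanishing Cauchy data on $\p\mathcal{S}(0)$: their Dirichlet trace there is the rigid field $k'+r\wedge y$, not zero, so the Fabre--Lebeau theorem cannot be invoked as you state it. One must first use the rigid-body components of the eigenvalue equation, $\mu Mk'=-\int_{\p\mathcal{S}}\sigma(\phi,\psi)n\,\d\Gamma$ and $\mu I_0 r=-\int_{\p\mathcal{S}}y\wedge\sigma(\phi,\psi)n\,\d\Gamma$, together with $\int_{\p\mathcal{S}}n\,\d\Gamma=0$ and $\int_{\p\mathcal{S}}y\wedge n\,\d\Gamma=0$, to conclude $k'=r=0$ when $\mu\neq0$ (the case $\mu=0$ being handled by the energy identity $2\nu\int_{\mathcal{F}}|D(\phi)|^2=0$ and the rigidity result of Temam, then the condition $\phi=0$ on $\p\mathcal{O}$). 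Only after that does the trace vanish and unique continuation apply. The paper's proof performs exactly this cleanup (time-differentiation, expansion in Stokes eigenfunctions, then the energy argument for the stationary residue); your proposal skips it, and as written the step "furnishes vanishing Cauchy data" is false. Also, the hypothesis $\dist(\p\mathcal{O};\mathcal{S}(0))>0$ is a geometric nondegeneracy of the fluid annulus, not what enables unique continuation.
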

For proving this theorem we study the system that has to satisfy the functions
\begin{eqnarray*}
\hat{U}(\cdot,t) = e^{\lambda t}U(\cdot,t), \quad \hat{P}(\cdot,t) = e^{\lambda t}P(\cdot,t),
\quad \hat{H}'(t) = e^{\lambda t}H'(t), \quad \hat{\Omega}(t) = e^{\lambda t}\Omega(t),
\end{eqnarray*}
and the goal is then to prove that there exists a control $\zeta$ such that this system admits a solution $(\hat{U},\hat{H}',\hat{\Omega})$ bounded in some infinite time horizon space. The strategy we follow is globally the same as the one used in \cite{JPR3}, at least for the linearized problem. It first consists in rewriting the full nonlinear system in space domains which do not depend on time anymore, by using a change of variables and a change of unknowns. Then we can make appear all the nonlinearities (specially those which are due to the variations of the geometry through the time) and we can set properly the linearized system. The second step of the proof consists in formulating the linearized system in terms of operators where the pressure is actually eliminated and encodes a {\it mass-added effect}. This writing enables us to define an analytic semigroup of contraction generated by an operator which presents interesting spectral properties: Indeed the unstable modes that we have to stabilize are actually countable and in finite number. Besides, we prove by a unique continuation argument the approximate controllability of this linearized system. Thus, in order to define the boundary control that stabilizes the full linearized system, it is sufficient to consider a finite-dimension linear system for which the approximate controllability is equivalent to the feedback stabilizability. The aforementioned control can be defined on a finite-dimension space in a feedback operator form, what will be useful for proving the stabilization of the full nonlinear system.\\
With regards to the methods, a novelty is the means provided in a last section which enables us to define from a boundary control an internal deformation satisfying the linearized constraints. This result too will be useful for the definition of a deformation of the solid - which has to satisfy the nonlinear constraints - that stabilizes the nonlinear system: Considering a deformation which satisfies in a first time the linearized constraints is not necessarily from a mathematical point of view, but the method with which we obtain it will be important for defining a deformation satisfying the nonlinear constraints (see section 5 of Part II); Besides, from a physical point of view, considering for the linearized system a deformation which satisfies the linear constraints is relevant since it ensures the conservation of the momenta for the whole fluid-solid linear system.\\
The idea of considering first the linearized problem relies on the fact that for small perturbations (that is to say for small initial conditions $u_0$, $h_1$ and $\omega_0$) the behavior of the nonlinear system is close to the one of the linearized system.\\
\hfill \\
Thus the same statement will be proven in the second part of this work for the unknowns of the full nonlinear system \eqref{prems}--\eqref{ders}. The result is nonintuitive: It says somewhat that all the fluid in which the solid swims can be stabilized just by the help of this swimmer, at an intermediate Reynolds number. This kind of problem has been investigated in \cite{Khapalov1, Khapalov2} for instance, where it is considered other types of fluid-swimmer systems. The same kind of purpose has been also investigated at a low Reynolds number in \cite{SanMartin2} and more recently in \cite{Loheac}, for the Stokes system, and in \cite{Munnier1, Munnier2} in the case of a perfect fluid. The control of the motion of a boat at a high Reynolds number has been recently studied in \cite{Glass}. Besides, the same kind of techniques that we use in this work have been used for other coupled systems involving the incompressible Navier-Stokes equations; Let us cite \cite{Avalos} and \cite{JPR3} for instance, where the stabilization of other fluid-structure problems is proven.

\subsection{Plan}
Definitions and notation are given in section \ref{secdef}. The linearized system is studied in section \ref{linearsec} where it is rewritten through an operator formulation for which we prove useful properties. The approximate controllability of this linearized system is proven in section \ref{secapproxcont}. It leads to the main result of this paper, namely the feedback stabilization of the linearized system in section \ref{secfeedback}. Finally in section \ref{secAhAh} we provide a means to recover an internal deformation of the solid from a feedback boundary control which lives only in the fluid-solid interface. This final section is the transition to the second part of this work where the considered control is an internal deformation of the solid.

\section{Definitions and notation} \label{secdef}
We denote by $\mathcal{F} = \mathcal{F}(0)$ the domain occupied by the fluid at time $t=0$, and by $\mathcal{S}= \mathcal{S}(0)$ the domain occupied by the solid at $t=0$. We assume that $\mathcal{S}$ is smooth enough. We set
\begin{eqnarray*}
S^0_{\infty} = \mathcal{S} \times (0,\infty), & \quad & Q_{\infty}^0 = \mathcal{F} \times (0,\infty).
\end{eqnarray*}
Let us introduce some functional spaces. \textcolor{black}{Concerning the classical Sobolev spaces}, we use the notation $\mathbf{H}^s(\Omega) = [\H^s(\Omega)]^d$ or $[\H^s(\Omega)]^{d^k}$, for some positive integer $k$, for all bounded domain $\Omega$ of $\R^2$ or $\R^3$. We classically define
\begin{eqnarray*}
\mathbf{V}^0_n(\mathcal{F}) & = & \left\{ \phi \in  \mathbf{L}^2(\mathcal{F}) \mid \div \
\phi = 0 \text{ in } \mathcal{F}, \ \phi \cdot n =0 \text{ on $\p \mathcal{O}$}\right\}, \\
\mathbf{V}^1_n(\mathcal{F}) & = & \left\{ \phi \in  \mathbf{H}^1(\mathcal{F}) \mid \div \
\phi = 0 \text{ in } \mathcal{F}, \ \phi \cdot n =0 \text{ on $\p \mathcal{O}$}\right\}, \\
\H^{2,1}(Q_{\infty}^0) & = & \L^2(0,\infty;\mathbf{H}^{2}(\mathcal{F})) \cap \H^{1}(0,\infty;\mathbf{L}^2(\mathcal{F})),
\end{eqnarray*}
Let us keep in mind the continuous embedding:
\begin{eqnarray*}
\H^{2,1}(Q_{\infty}^0) & \hookrightarrow & \L^{\infty}(0,\infty; \mathbf{H}^1(\mathcal{F})).
\end{eqnarray*}

\noindent We finally set the spaces dealing with compatibility conditions
\begin{eqnarray*}
\mathbf{H}^0_{cc} & = & \left\{(u_0,h_1,\omega_0) \in \mathbf{V}^0_n(\mathcal{F}) \times \R^3 \times \R^3 \mid \
u_0 = h_1 +\omega_0 \wedge y \text{ on } \p \mathcal{S} \right\}, \\
\mathbf{H}^1_{cc} & = & \left\{(u_0,h_1,\omega_0) \in \mathbf{V}^1_n(\mathcal{F}) \times \R^3 \times \R^3 \mid \
u_0 = h_1 +\omega_0 \wedge y \text{ on } \p \mathcal{S} \right\}.
\end{eqnarray*}
For more simplicity, we assume that the density $\rho_{\mathcal{S}}$ at time $t=0$ is constant with respect to the space:
\begin{eqnarray*}
\rho_{\mathcal{S}}(y,0) = \rho_{\mathcal{S}} > 0.
\end{eqnarray*}
We assume without loss of generality that $h_0 = 0$. This implies in particular
\begin{eqnarray*}
\int_{\mathcal{S}}y \d y & = & 0.
\end{eqnarray*}
Let us state the conditions we shall assume on mappings chosen as control functions.
\begin{definition} \label{deflincontrol}
\textcolor{black}{Let $\lambda >0$ be a desired exponential decay rate.} A deformation $X^{\ast}$ is said admissible for the linear system \eqref{lin1}--\eqref{lin10} if
\begin{eqnarray*}
e^{\lambda t}\frac{\p X^{\ast}}{\p t} & \in & \L^2(0,\infty;\mathbf{H}^3(\mathcal{S}) \cap \H^1(0,\infty;\mathbf{H}^1(\mathcal{S}))
\end{eqnarray*}
and if for all $t \geq 0$ it satisfies the following hypotheses
\begin{eqnarray}
\int_{\p \mathcal{S}} \displaystyle \frac{\p X^{\ast}}{\p t}(y,t) \cdot n \d \Gamma(y) & = & 0, \label{const105} \\
\int_{\mathcal{S}}  X^{\ast}(y,t) \d y & = & 0, \label{const101} \\
\int_{\mathcal{S}}  y\wedge \frac{\p X^{\ast}}{\p t}(y,t) \d y & = & 0. \label{const102}
\end{eqnarray}
\end{definition}

\textcolor{black}{
\begin{remark}
In the defintion above, the hypotheses \eqref{const105}, \eqref{const101}, \eqref{const102} are the linearized versions of the constraints \eqref{const1}, \eqref{const2}, \eqref{const3} respectively, with respect to the mapping $X^{\ast}$. The expression of \eqref{const2} is already linear, whereas the two other ones can be written as follows
\begin{eqnarray*}
\int_{\p \mathcal{S}} \displaystyle \frac{\p X^{\ast}}{\p t} \cdot \left(\com \nabla X^{\ast} \right)n \d \Gamma & = &
\int_{\p \mathcal{S}} \displaystyle \frac{\p X^{\ast}}{\p t} \cdot n \d \Gamma  \\
& & + \int_{\p \mathcal{S}} \displaystyle \frac{\p (X^{\ast}- y)}{\p t} \cdot \left(\com \nabla X^{\ast} - \I_{\R^3} \right)n \d \Gamma, \\
\int_{\mathcal{S}} X^{\ast}\wedge \frac{\p X^{\ast}}{\p t} \d y & = &
\int_{\mathcal{S}}  y \wedge \frac{\p X^{\ast}}{\p t} \d y 
+ \int_{\mathcal{S}}  (X^{\ast}- y)\wedge \frac{\p (X^{\ast}- y)}{\p t} \d y.
\end{eqnarray*}
Lipschitz properties for the quadratic residues - like the one involving $(\com \nabla X^{\ast} - \I_{\R^3})$ for instance - will be studied in Part II. The transition between deformations which satisfy the linearized constraints and deformations which satisfy the nonlinear constraints is mainly explained in section 5.3 of Part II.
\end{remark}
}

\section{Operator formulation: Definition of an analytic semigroup} \label{linearsec}
\textcolor{black}{System \eqref{prems}--\eqref{ders} is strongly coupled, in particular from a geometrical point of view. A first work would consists in rewriting it in space domains which do not depend on time anymore, by using a change of unknowns defined with the help of a change of variables. For more details we refer to section 3.2 of Part II, where it is explained why in this section we study the following linear system, which is nothing else than system \eqref{lin1}--\eqref{lin10} with a resolvent term:}
\begin{eqnarray}
\frac{\p u}{\p t} - \lambda u - \div \ \sigma(u,p) & = & 0, \quad \textrm{in $\mathcal{F}\times (0,\infty)$}, \label{premss}\\
\div \  u & = & 0, \quad \textrm{in $\mathcal{F}\times (0,\infty)$},
\end{eqnarray}
\begin{eqnarray}
u  =  0 , &\quad & \textrm{in $\p \mathcal{O}\times (0,\infty)$}, \\
u  =  h'(t) + \omega(t) \wedge y + \zeta(y,t) , & \quad & y \in \p \mathcal{S} ,\quad t\in (0,\infty),
\end{eqnarray}
\begin{eqnarray}
M h''-\lambda M h' = - \int_{\p \mathcal{S}} \sigma(u,p) n \d \Gamma , & \quad & \textrm{in } (0,\infty),  \label{cinqss}\\
I_0\omega' - \lambda I_0\omega = -  \int_{\p \mathcal{S}} y \wedge \sigma(u,p) n \d \Gamma , & \quad & \textrm{in } (0,\infty), \label{sixss}
\end{eqnarray}
\begin{eqnarray}
u(y,0) & = & u_0 (y), \  y \in \mathcal{F}, \quad h'(0)=h_1 \in \R^3 ,\quad \omega(0) = \omega_0 \in \R^3. \label{derss}
\end{eqnarray}

\textcolor{black}{Note that in this linear system the control (initially chosen as the deformation of the solid) appears only on the boundary $\p \mathcal{S}$ through the function $\zeta$ which stands for $\displaystyle e^{\lambda t}\frac{\p X^{\ast}}{\p t}$. Thus the problem of stabilization of this linear system is reduced to a problem of boundary stabilization.}

\subsection{Introduction of some operators} \label{secdefop}
For what follows, we need to introduce some operators. Let us first remind the notation of the stress tensor of some vector field $u$
\begin{eqnarray*}
D(u) & = & \frac{1}{2}\left(\nabla u + \nabla u ^T\right).
\end{eqnarray*}
and let us denote the Hessian matrix operator as
\begin{eqnarray*}
H & = & \nabla^2.
\end{eqnarray*}
For $h'\in \R^3$ and $\omega \in \R^3$, we define $N(h')$ and $\hat{N}(\omega)$ as being the respective solutions $q$ and $\hat{q}$ of the Neumann problems
\begin{eqnarray*}
\Delta q = 0 \text{ in } \mathcal{F}, \quad \frac{\p q}{\p n} = h'\cdot n \text{ on } \p \mathcal{S}, \quad \frac{\p q}{\p n} = 0 \text{ on } \p \mathcal{O}, \\
\Delta \hat{q} = 0 \text{ in } \mathcal{F}, \quad \frac{\p \hat{q}}{\p n} = (\omega \wedge y)\cdot n \text{ on } \p \mathcal{S}, \quad \frac{\p \hat{q}}{\p n} = 0 \text{ on } \p \mathcal{O}.
\end{eqnarray*}
For $\varphi \in \mathbf{H}^{1/2}(\p \mathcal{S})$, we define $L_0\varphi = \w$ as being the solution of the Stokes problem
\begin{eqnarray*}
-\nu \Delta \w + \nabla \psi = 0 \text{ in } \mathcal{F}, & \quad & \div \ \w = 0  \text{ in } \mathcal{F}, \\
\w = 0  \text{ on } \p \mathcal{O}, & \quad & \w = \varphi  \text{ on } \p \mathcal{S}.
\end{eqnarray*}
Similarly we define $\hat{L}_0\varphi = \hat{\w}$ as being the solution of
\begin{eqnarray*}
-\nu \Delta \hat{\w} + \nabla \hat{\psi} = 0 \text{ in } \mathcal{F}, &\quad & \div \ \hat{\w} = 0  \text{ in } \mathcal{F}, \\
\hat{\w} = 0  \text{ on } \p \mathcal{O}, & \quad & \hat{\w} = \varphi \wedge y  \text{ on } \p \mathcal{S}.
\end{eqnarray*}
The operators $L_0$ and $\hat{L}_0$ are called {\it lifting operators}. We also define the following integration operators
\begin{eqnarray*}
\mathcal{C}\varphi  =  \int_{\p \mathcal{S}} \varphi n\d \Gamma , & \quad &
\hat{\mathcal{C}}\varphi  =  \int_{\p \mathcal{S}} y\wedge \varphi n\d \Gamma .
\end{eqnarray*}
We denote by $\mathbb{P} : \mathbf{L}^2(\mathcal{F}) \rightarrow \mathbf{V}^0_n(\mathcal{F})$ the so-called Leray or Helmholtz operator, which is the orthogonal projection induced by the decomposition
\begin{eqnarray*}
\mathbf{L}^2(\mathcal{F}) = \mathbf{V}^0_n(\mathcal{F}) \oplus \nabla \mathbf{H}^1(\mathcal{F}).
\end{eqnarray*}
Then we denote in $\mathbf{V}^0_n$ the classical Stokes operator by
\begin{eqnarray*}
A_0 & = & \nu \mathbb{P} \Delta,
\end{eqnarray*}
with domain $D(A_0) = \mathbf{H}^2(\mathcal{F}) \cap \mathbf{H}^1_0(\mathcal{F}) \cap \mathbf{V}^0_n(\mathcal{F})$.

\subsection{Operator formulation}
Let us first consider system \eqref{premss}--\eqref{derss} only when $\displaystyle \zeta = 0$; In that case we denote by $(v,p,h',\omega)$ the unknowns. But this system can be transformed into a system whose unknowns are only $(v,h',\omega)$. Indeed, by following the method which is used in \cite{JPR1} or \cite{JPR3} for instance, the pressure $p$ can be eliminated in the equations \eqref{premss}, \eqref{cinqss} and \eqref{sixss}. By this means we obtain that $p$ can be written
\begin{eqnarray*}
p & = & \pi - \frac{\p q }{\p t},
\end{eqnarray*}
where $\pi$ is solution of the following Neumann problem
\begin{eqnarray*}
\Delta \pi(t) = 0 \text{ in } \mathcal{F}, & \quad & \frac{\p \pi(t)}{\p n} = \nu \mathbb{P}\Delta v(t)\cdot n \text{ on } \p \mathcal{F},
\end{eqnarray*}
and $q$ is solution of this other Neumann problem which involves the boundary conditions
\begin{eqnarray*}
\Delta q(t) = 0 \text{ in } \mathcal{F}, \quad \frac{\p q(t)}{\p n} = (h'(t)+\omega(t)\wedge y)\cdot n \text{ on } \p \mathcal{S}, \quad \frac{\p q(t)}{\p n} = 0 \text{ on } \p \mathcal{O}.
\end{eqnarray*}
Moreover, $\nabla p$ can be expressed through a lifting; More precisely, we have
\begin{eqnarray*}
\nabla p & = & (-A_0)\mathbb{P}L_0(h'+\omega\wedge y) \quad \text{ in } \mathcal{F}.
\end{eqnarray*}
Thus, we can split system \eqref{premss}--\eqref{derss} into two systems, one satisfied by $(\mathbb{P}v,h',\omega)$, and the other one satisfied by $(\Id - \mathbb{P})v$. Explicitly, by denoting $V=(\mathbb{P}v,h',\omega)^T$, system \eqref{premss}--\eqref{derss} can be rewritten (for $\zeta = 0$) as follows
\begin{eqnarray}
\left( \mathbb{M}_0 + \mathbb{M}_{add} \right) V' & = & \mathbb{A} V +\lambda \mathbb{M}_0 V, \label{expform0} \\
\left( \Id -\mathbb{P} \right) v & = & \left( \Id -\mathbb{P} \right)\left(L_0(h') + \hat{L}_0(\omega)\right), \label{expform}
\end{eqnarray}
with
\begin{eqnarray*}
\mathbb{M}_0 = \left[ \begin{matrix}
\Id & 0 & 0 \\
0 & M\Id & 0 \\
0 & 0 & I_0
\end{matrix}
\right], & \quad &
\mathbb{M}_{add} = \left[ \begin{matrix}
0 & 0 & 0 \\
0 & \mathcal{C}N & \mathcal{C}\hat{N} \\
0 & \hat{\mathcal{C}}N & \hat{\mathcal{C}}\hat{N}
\end{matrix}
\right],
\end{eqnarray*}
and
\begin{eqnarray*}
\mathbb{A} & = &
\left[ \begin{matrix}
A_0 & (-A_0)\mathbb{P}L_0 & (-A_0)\mathbb{P}\hat{L}_0 \\
\mathcal{C}(-2\nu D + N A_0) & -\mathcal{C}H N & -\mathcal{C}H \hat{N} \\
\hat{\mathcal{C}}(-2\nu D + N A_0) & -\hat{\mathcal{C}}H N & -\hat{\mathcal{C}}H \hat{N}
\end{matrix}
\right].
\end{eqnarray*}
The operator $\mathbb{M}_{add}$ represents a {\it mass-added} effect. We are going to see that it contributes to making the "effective mass operator" - namely $\mathbb{M}_0 + \mathbb{M}_{add}$ - self-adjoint and positive.

\subsection{Main properties of the operator $\mathbb{A}$}
Let us set
\begin{eqnarray*}
\mathbb{M} & = & \mathbb{M}_0 + \mathbb{M}_{add}.
\end{eqnarray*}

\begin{lemma}
$\mathbb{M}$ is self-adjoint and positive.
\end{lemma}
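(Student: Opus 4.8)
The plan is to reduce the claim to a statement about the finite-dimensional ``rigid'' block of $\mathbb{M}$. Since the mass-added operator $\mathbb{M}_{add}$ has a vanishing first row and column, the operator $\mathbb{M} = \mathbb{M}_0 + \mathbb{M}_{add}$ acting on $\mathbf{V}^0_n(\mathcal{F}) \times \R^3 \times \R^3$ splits as the orthogonal sum of the identity on the fluid component $\mathbf{V}^0_n(\mathcal{F})$ --- which is trivially self-adjoint and positive --- and the matrix
\[
\mathbb{M}^{\mathrm{rig}} = \begin{bmatrix} M\Id + \mathcal{C}N & \mathcal{C}\hat{N} \\ \hat{\mathcal{C}}N & I_0 + \hat{\mathcal{C}}\hat{N} \end{bmatrix}
\]
on $\R^3 \times \R^3$. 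Since $M > 0$ and $I_0$ is a symmetric positive-definite inertia matrix, the summand $\mathrm{diag}(M\Id, I_0)$ is already symmetric positive-definite, so it suffices to show that the mass-added block $\mathbb{M}^{\mathrm{rig}}_{add}$ is symmetric and positive semidefinite.

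To this end I would introduce, for each pair $(h',\omega) \in \R^3 \times \R^3$, the harmonic potential $\Phi_{h',\omega} = N(h') + \hat{N}(\omega)$, which by definition solves $\Delta \Phi = 0$ in $\mathcal{F}$, $\frac{\p \Phi}{\p n} = (h'+\omega\wedge y)\cdot n$ on $\p \mathcal{S}$ and $\frac{\p \Phi}{\p n} = 0$ on $\p \mathcal{O}$. The first and second $\R^3$-components of $\mathbb{M}^{\mathrm{rig}}_{add}(h',\omega)$ are then $\int_{\p\mathcal{S}} \Phi_{h',\omega}\, n\,\d\Gamma$ and $\int_{\p\mathcal{S}} y\wedge \Phi_{h',\omega}\, n\,\d\Gamma$ respectively. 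Pairing against a second datum $(g',\varpi)$ and using the scalar triple-product identities $g'\cdot(\Phi n) = \Phi\,(g'\cdot n)$ and $\varpi\cdot(y\wedge \Phi n) = \Phi\,(\varpi\wedge y)\cdot n$, I would collapse the pairing into the single boundary integral
\[
\int_{\p\mathcal{S}} \Phi_{h',\omega}\,(g'+\varpi\wedge y)\cdot n\,\d\Gamma = \int_{\p\mathcal{S}} \Phi_{h',\omega}\,\frac{\p \Phi_{g',\varpi}}{\p n}\,\d\Gamma.
\]

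The final step is a Green identity. Using the homogeneous Neumann condition on $\p\mathcal{O}$ to extend the integral to all of $\p\mathcal{F}$, and then integrating by parts against the harmonic function $\Phi_{g',\varpi}$, the pairing becomes $\int_{\mathcal{F}} \nabla\Phi_{h',\omega}\cdot\nabla\Phi_{g',\varpi}\,\d x$, with the sign dictated by the orientation of $n$ (taken exterior to $\mathcal{F}$). This Dirichlet-energy representation is manifestly symmetric under exchange of $(h',\omega)$ and $(g',\varpi)$, which yields the symmetry of $\mathbb{M}^{\mathrm{rig}}_{add}$ and hence the self-adjointness of $\mathbb{M}$; taking $(g',\varpi) = (h',\omega)$ gives $\int_{\mathcal{F}} |\nabla\Phi_{h',\omega}|^2\,\d x \geq 0$, so $\mathbb{M}_{add}$ is positive semidefinite and $\mathbb{M} = \mathbb{M}_0 + \mathbb{M}_{add}$ is positive-definite.

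I expect the only delicate point to be the bookkeeping of the orientation convention, so that the Green identity produces the $+\int_{\mathcal{F}}|\nabla\Phi|^2$ sign rather than its opposite: with the inward convention the same computation would make $\mathbb{M}_{add}$ negative semidefinite, so one must check that ``$n$ exterior to $\mathcal{F}$'' is indeed the outward normal of the fluid domain used in the definitions of $N$, $\hat N$, $\mathcal{C}$ and $\hat{\mathcal{C}}$. Once that sign is fixed consistently, both self-adjointness and positivity follow simultaneously from the single Dirichlet-energy identity above.
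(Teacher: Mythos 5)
Your proof is correct and rests on the same mechanism as the paper's: Green's formula applied to the harmonic Neumann potentials, which turns the added-mass pairing into a Dirichlet integral over $\mathcal{F}$ (and your sign worry resolves favorably, since the paper takes $n$ on $\p\mathcal{S}$ exterior to $\mathcal{F}$, i.e.\ outward for the fluid domain, so the identity reads $\int_{\mathcal{F}}\nabla\Phi_1\cdot\nabla\Phi_2\,\d x=\int_{\p\mathcal{S}}\Phi_1\,\frac{\p\Phi_2}{\p n}\,\d\Gamma$ with a plus sign). The only difference is organizational: you handle the whole $2\times 2$ added-mass block at once via the combined potential $\Phi_{h',\omega}=N(h')+\hat{N}(\omega)$, getting symmetry and non-negativity simultaneously from the single identity $\langle(g',\varpi),\mathbb{M}_{add}(h',\omega)\rangle=\int_{\mathcal{F}}\nabla\Phi_{h',\omega}\cdot\nabla\Phi_{g',\varpi}\,\d x$, whereas the paper verifies the blocks $\mathcal{C}N$, $\hat{\mathcal{C}}\hat{N}$ and $\mathcal{C}\hat{N}=(\hat{\mathcal{C}}N)^{T}$ separately and only afterwards writes the quadratic form as $\int_{\mathcal{F}}|\nabla q+\nabla\hat{q}|^{2}\,\d x$.
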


\begin{proof}
Observe that $\mathbb{M}_0$ is self-adjoint and positive. Then it is sufficient to show that $\mathbb{M}_{add}$ is self-adjoint and non-negative. Let us begin with noticing that $\mathcal{C}N$ is self-adjoint. Indeed, if $q_1$ and $q_2$ denote respectively $N(h'_1)$ and $N(h'_2)$, by using twice the Green formula we get
\begin{eqnarray*}
\begin{array} {ccccc}
\int_{\mathcal{F}} \nabla q_1 \cdot \nabla q_2 \d y & = & \displaystyle \int_{\p \mathcal{S}}(h'_1\cdot n) q_2 \d \Gamma & = & \displaystyle \int_{\p \mathcal{S}} (h'_2\cdot n) q_1 \d \Gamma \\
& = & h'_1 \cdot \mathcal{C}N(h'_2) & = & h'_2 \cdot \mathcal{C}N(h'_1).
\end{array}
\end{eqnarray*}
We can also see that $\hat{\mathcal{C}}\hat{N}$ is self-adjoint. If $\hat{q}_1$ and $\hat{q}_2$ denote respectively $\hat{N}(\omega_1)$ and $\hat{N}(\omega_2)$, by using twice the Green formula we get
\begin{eqnarray*}
\begin{array} {ccccc}
\int_{\mathcal{F}} \nabla \hat{q}_1 \cdot \nabla \hat{q}_2 \d y & = & \displaystyle \int_{\p \mathcal{S}} \omega_1 \cdot (y\wedge n) \hat{q}_2 \d \Gamma & = & \displaystyle \int_{\p \mathcal{S}} \omega_2 \cdot (y\wedge n) \hat{q}_1 \d \Gamma \\
& = & \omega_1 \cdot \hat{\mathcal{C}}\hat{N}(\omega_2) & = & \omega_2 \cdot \hat{\mathcal{C}}\hat{N}(\omega_1).
\end{array}
\end{eqnarray*}
Likewise, let us show that $(\hat{\mathcal{C}}N )^T = \mathcal{C}\hat{N}$. First, we denote $\hat{q} = \hat{N}(\omega)$, and by using twice the Green formula, we have
\begin{eqnarray*}
\begin{array} {ccccc}
\mathcal{C}\hat{N}(\omega) & = & \displaystyle \int_{\mathcal{F}}\nabla \hat{q} \d y & = & \displaystyle \int_{\p \mathcal{S}} \frac{\p \hat{q}}{\p n} y \d \Gamma \\
& = & \displaystyle \int_{\p \mathcal{S}} y \otimes y (n\wedge \omega) \d \Gamma & = & \displaystyle \left(\int_{\p \mathcal{S}} y\otimes (y\wedge n)\d \Gamma \right) \omega.
\end{array}
\end{eqnarray*}
On the other hand, if we denote $q = \hat{\mathcal{C}}N(h')$, let us notice that $q= h'\cdot y$. And then
\begin{eqnarray*}
\begin{array} {ccccc}
\hat{\mathcal{C}}N(h') & = & \displaystyle \int_{\p \mathcal{S}}(y\cdot h')(y\wedge n)\d \Gamma & = & \displaystyle \left( \int_{\p \mathcal{S}} (y\wedge n) \otimes y \d \Gamma \right) h'.
\end{array}
\end{eqnarray*}
Then we can conclude that $\mathbb{M}_{add}$ is self-adjoint. In order to prove that $\mathbb{M}_{add}$ is non-negative, let us see that the corresponding quadratic term can be written
\begin{eqnarray*}
V^T \mathbb{M}_{add} V = \int_{\mathcal{F}} \left| \nabla q + \nabla \hat{q} \right|^2 \d y \geq 0
\end{eqnarray*}
for $V = (\mathbb{P}v,h',\omega)^T$, $q = N(h')$ and $\hat{q} = \hat{N}(\omega)$.
\end{proof}


In the following, we will denote
\begin{eqnarray*}
\mathcal{A} & = & \left(\mathbb{M}_0+\mathbb{M}_{add}\right)^{-1}\mathbb{A} .
\end{eqnarray*}

\begin{proposition} \label{propsa}
The domain of $\mathcal{A}$ is
\begin{eqnarray*}
D(\mathcal{A}) & = & \mathbf{H}^1_{cc} \cap \left(\mathbf{H}^2(\mathcal{F})\times \R^3 \times \R^3\right),
\end{eqnarray*}
and $\mathbb{A} = \mathbb{M}\mathcal{A}$ is self-adjoint.
\end{proposition}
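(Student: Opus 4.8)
The plan is to exploit that, by the previous Lemma together with the boundedness of $\mathbb{M}_{add}$, the operator $\mathbb{M} = \mathbb{M}_0 + \mathbb{M}_{add}$ is a bounded, self-adjoint, positive isomorphism of the state space $\mathcal{H} = \mathbf{V}^0_n(\mathcal{F}) \times \R^3 \times \R^3$. Endowing $\mathcal{H}$ with the equivalent scalar product $\langle V, W\rangle_{\mathbb{M}} := \langle \mathbb{M} V, W\rangle_{\mathcal{H}}$, the self-adjointness of $\mathcal{A} = \mathbb{M}^{-1}\mathbb{A}$ for this product is equivalent to the symmetry of $\mathbb{A}$ together with a domain identity, since $\langle \mathcal{A}V, W\rangle_{\mathbb{M}} = \langle \mathbb{A}V, W\rangle_{\mathcal{H}}$. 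Because $\mathbb{M}^{-1}$ is bounded, $D(\mathcal{A}) = D(\mathbb{A})$, so I would first pin down this domain.

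For the domain, I read off $\mathbb{A}$ row by row. The top-left block is the Stokes operator $A_0$, which forces $\mathbb{P}v \in D(A_0) = \mathbf{H}^2(\mathcal{F}) \cap \mathbf{H}^1_0(\mathcal{F}) \cap \mathbf{V}^0_n(\mathcal{F})$. Reconstructing the full fluid velocity as $v = \mathbb{P}v + (\Id - \mathbb{P})(L_0 h' + \hat{L}_0\omega)$ via \eqref{expform}, the regularity of the lifting operators $L_0, \hat{L}_0$ and the elliptic regularity of $A_0$ guarantee $v \in \mathbf{H}^2(\mathcal{F})$ and impose exactly the boundary data $v = 0$ on $\p\mathcal{O}$ and $v = h' + \omega \wedge y$ on $\p\mathcal{S}$. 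These are precisely the conditions defining the compatibility space, so that $D(\mathcal{A}) = \mathbf{H}^1_{cc} \cap (\mathbf{H}^2(\mathcal{F}) \times \R^3 \times \R^3)$, as claimed.

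The core of the argument is the symmetry of $\mathbb{A}$. For $V_i = (\mathbb{P}v_i, h'_i, \omega_i) \in D(\mathbb{A})$, $i = 1,2$, I would reconstruct the associated divergence-free velocities $v_i \in \mathbf{H}^2(\mathcal{F})$ satisfying the rigid boundary conditions above, and compute $\langle \mathbb{A}V_1, V_2\rangle_{\mathcal{H}}$ term by term. The fluid-fluid term produces $\int_{\mathcal{F}} \nu \Delta v_1 \cdot v_2 \, \d y$; integrating by parts and using $\div v_i = 0$ yields the Dirichlet bilinear form $-2\nu \int_{\mathcal{F}} D(v_1) : D(v_2)\, \d y$ plus boundary integrals of the stress of $v_1$ against $v_2$ on $\p\mathcal{S}$. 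The role of the entries $\mathcal{C}(-2\nu D + N A_0)$, $\hat{\mathcal{C}}(-2\nu D + N A_0)$, $-\mathcal{C}H N$, and their companions — together with the relations for $\mathcal{C}N$, $\hat{\mathcal{C}}\hat{N}$ and $(\hat{\mathcal{C}}N)^T = \mathcal{C}\hat{N}$ already established in the Lemma — is precisely to absorb these boundary integrals and the pressure liftings, so that all mixed contributions reorganize into
\[
\langle \mathbb{A}V_1, V_2\rangle_{\mathcal{H}} = -2\nu \int_{\mathcal{F}} D(v_1) : D(v_2)\, \d y,
\]
an expression manifestly symmetric in $V_1, V_2$. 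I expect this reorganization — tracking every boundary term on $\p\mathcal{S}$ and checking that the pressure contributions coming from $(-A_0)\mathbb{P}L_0$ and the Neumann data $N, \hat{N}$ cancel against the stress integrals $\mathcal{C}, \hat{\mathcal{C}}$ — to be the main obstacle; it is essentially a careful bookkeeping of Green's identities made delicate by the many coupling operators.

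Finally, to upgrade symmetry to self-adjointness I would use that the identity above also gives $\langle \mathcal{A}V, V\rangle_{\mathbb{M}} = -2\nu \int_{\mathcal{F}} |D(v)|^2 \, \d y \leq 0$, so $\mathcal{A}$ is symmetric and non-positive for $\langle\cdot,\cdot\rangle_{\mathbb{M}}$. A symmetric non-positive operator for which $\beta\,\Id - \mathcal{A}$ is surjective for some $\beta > 0$ is self-adjoint, so it suffices to solve the resolvent problem $(\beta\,\Id - \mathcal{A})V = F$. This is a stationary fluid-solid Stokes system; I would establish its unique solvability by applying the Lax-Milgram theorem to the continuous, coercive bilinear form $a(V, W) = 2\nu \int_{\mathcal{F}} D(v):D(w)\, \d y + \beta\langle V, W\rangle_{\mathbb{M}}$ on $\mathbf{H}^1_{cc}$, coercivity following from Korn's inequality (using $v = 0$ on $\p\mathcal{O}$). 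Surjectivity of $\beta\,\Id - \mathcal{A}$ together with symmetry then yields that $\mathcal{A}$ is self-adjoint for $\langle\cdot,\cdot\rangle_{\mathbb{M}}$, equivalently that $\mathbb{A} = \mathbb{M}\mathcal{A}$ is self-adjoint, which is the assertion.
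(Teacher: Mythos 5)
Your proposal is correct and follows essentially the same route as the paper: the heart of both arguments is the integration by parts showing that the boundary and pressure contributions cancel against the $\mathcal{C}$, $\hat{\mathcal{C}}$ terms so that the form reduces to the symmetric expression $2\nu\int_{\mathcal{F}} D(v_1):D(v_2)\,\d y$ (up to the $\lambda$-mass terms). You are in fact slightly more complete than the paper, which stops at symmetry of $\lambda\mathbb{M}-\mathbb{A}$ and does not spell out the domain identification nor the surjectivity step needed to upgrade symmetry to self-adjointness; that surjectivity is exactly the Lax--Milgram argument the paper gives separately in the subsequent proposition on compactness of the resolvent, which you have correctly folded into your proof.
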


\begin{proof}
Let $V_1 = (v_1,h'_1,\omega_1)^T$ and $V_2 = (v_2,h'_2,\omega_2)^T$ lie in $D(\mathcal{A})$. We set $(F_1,F_{M_1},F_{I_1})^T = (\lambda \mathbb{M} - \mathbb{A})V_1$ and $(F_2,F_{M_2},F_{I_2})^T = (\lambda \mathbb{M} - \mathbb{A})V_2$, that is to say that for $i\in \{1,2\}$ we have
\begin{eqnarray*}
\lambda v_i - \div \ \sigma(v_i,p_i) =
F_i, & \quad & \textrm{in $\mathcal{F} $}, \\
\div \  v_i = 0, & \quad & \textrm{in $\mathcal{F}$},
\end{eqnarray*}
\begin{eqnarray*}
v_i  =  0 , &\quad & \textrm{in $\p \mathcal{O}$}, \\
v_i  =  h_i'(t) + \omega_i(t) \wedge y , & \quad & y \in \ \p \mathcal{S},
\end{eqnarray*}
\begin{eqnarray*}
\lambda M h_i'(t) = - \int_{\p \mathcal{S}} \sigma(v_i,p_i) n \d \Gamma +F_{M_i}, \\
\lambda I_0\omega_i (t) = -  \int_{\p \mathcal{S}} y \wedge \sigma(v_i,p_i) n \d \Gamma + F_{I_i},
\end{eqnarray*}
with
\begin{eqnarray*}
p_i & = & N(A_0 v_i) - \lambda N(h'_i+\omega_i \wedge y).
\end{eqnarray*}
We calculate
\begin{eqnarray*}
\langle V_2 ; (F_1,F_{M_1},F_{I_1})^T \rangle_{\mathbf{L}^2(\mathcal{F})\times \R^3 \times \R^3} = \lambda \langle V_1,V_2 \rangle_{\mathbf{L}^2(\mathcal{F})} - \int_{\mathcal{F}}v_1\cdot \div \ \sigma(v_1,p_1) & & \\
  + h'_2\cdot \int_{\p \mathcal{S}}\sigma(v_1,p_1)n\d \Gamma +\omega_2 \cdot \int_{\p \mathcal{S}}y\wedge \sigma(v_1,p_1)n\d \Gamma, & &
\end{eqnarray*}
and by integration by parts we get
\begin{eqnarray*}
& &- \int_{\mathcal{F}}v_1\cdot \div \ \sigma(v_1,p_1) + h'_2\cdot \int_{\p \mathcal{S}}\sigma(v_1,p_1)n\d \Gamma +\omega_2 \cdot \int_{\p \mathcal{S}}y\wedge \sigma(v_1,p_1)n\d \Gamma \\
 & = & - \int_{\p \mathcal{S}}v_2\cdot \sigma(v_1,p_1)n\d \Gamma + 2\nu\int_{\mathcal{S}} D(v_1):D(v_2) \\
 & & + h'_2\cdot \int_{\p \mathcal{S}}\sigma(v_1,p_1)n\d \Gamma +\omega_2 \cdot \int_{\p \mathcal{S}}y\wedge \sigma(v_1,p_1)n\d \Gamma  \\
 & = & 2\nu\int_{\mathcal{S}} D(v_1):D(v_2).
\end{eqnarray*}
Then, by swapping the roles of $V_1$ and $V_2$, it is easy to see that
\begin{eqnarray*}
\langle V_2 ; (F_1,F_{M_1},F_{I_1})^T \rangle_{\mathbf{L}^2(\mathcal{F})\times \R^3 \times \R^3} & = & \langle (F_2,F_{M_2},F_{I_2})^T ; V_1 \rangle_{\mathbf{L}^2(\mathcal{F})\times \R^3 \times \R^3} .
\end{eqnarray*}
It shows that $\lambda \mathbb{M} - \mathbb{A}$ is self-adjoint. Since $\mathbb{M}$ is self-adjoint, the proof is complete.\\
\end{proof}

\begin{proposition}
The resolvent of $\mathcal{A}$ is compact.
\end{proposition}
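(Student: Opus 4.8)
The plan is to exhibit the resolvent of $\mathcal{A}$ as the composition of a bounded operator with a compact embedding, the compactness coming ultimately from the Rellich--Kondrachov theorem on the bounded smooth domain $\mathcal{F}$. Throughout, write $\mathcal{H} = \mathbf{V}^0_n(\mathcal{F}) \times \R^3 \times \R^3$ for the ambient space on which $\mathcal{A}$ acts, equipped with the inner product induced by $\mathbb{M}$; since $\mathbb{M} = \mathbb{M}_0 + \mathbb{M}_{add}$ is self-adjoint, bounded, and bounded below by $\mathbb{M}_0$ (which is positive definite), this inner product is equivalent to the standard one on $\mathbf{L}^2(\mathcal{F}) \times \R^3 \times \R^3$.

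First I would record that the resolvent set of $\mathcal{A}$ is nonempty. By Proposition \ref{propsa} the operator $\mathbb{A} = \mathbb{M}\mathcal{A}$ is self-adjoint, so $\mathcal{A} = \mathbb{M}^{-1}\mathbb{A}$ is self-adjoint with respect to $\langle \cdot,\cdot\rangle_{\mathbb{M}}$; in particular it is closed and its spectrum is real, so every $\mu \in \mathbb{C}\setminus\R$ belongs to $\rho(\mathcal{A})$ and $(\mu - \mathcal{A})^{-1}$ is a bounded operator on $\mathcal{H}$.

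The heart of the argument is the compactness of the embedding of $D(\mathcal{A})$, endowed with its graph norm, into $\mathcal{H}$. By Proposition \ref{propsa} we have $D(\mathcal{A}) \subset \mathbf{H}^2(\mathcal{F}) \times \R^3 \times \R^3$, and I would argue at the level of sequences: given $F_n$ bounded in $\mathcal{H}$, set $V_n = (\mu - \mathcal{A})^{-1}F_n = (\mathbb{P}v_n, h'_n, \omega_n)$. Then $V_n$ is bounded in $\mathcal{H}$ and $\mathcal{A}V_n = \mu V_n - F_n$ is bounded in $\mathcal{H}$ as well. Reading off the first component, the boundedness of $\mathcal{A}V_n$ in $\mathbf{L}^2(\mathcal{F})$ means that $A_0(\mathbb{P}v_n - \mathbb{P}L_0 h'_n - \mathbb{P}\hat{L}_0\omega_n)$ stays bounded, which is nothing but a stationary Stokes regularity problem; the elliptic estimate for the Stokes operator $A_0$ then bounds $\mathbb{P}v_n$ in $\mathbf{H}^2(\mathcal{F})$ uniformly in $n$ (the lifting terms $\mathbb{P}L_0 h'_n$ and $\mathbb{P}\hat{L}_0\omega_n$ being controlled by the bounded sequences $h'_n,\omega_n \in \R^3$). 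By Rellich--Kondrachov the embedding $\mathbf{H}^2(\mathcal{F}) \hookrightarrow \mathbf{L}^2(\mathcal{F})$ is compact, so $(\mathbb{P}v_n)$ has a subsequence converging in $\mathbf{L}^2(\mathcal{F})$; the finite-dimensional components $(h'_n, \omega_n)$, being bounded in $\R^3 \times \R^3$, converge along a further subsequence. Hence $V_n$ admits a subsequence converging in $\mathcal{H}$, which shows that $(\mu - \mathcal{A})^{-1}$ is compact, and therefore so is the resolvent at every point of $\rho(\mathcal{A})$ by the resolvent identity.

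The step I expect to require the most care is the uniform $\mathbf{H}^2(\mathcal{F})$ bound: one must make precise how $\mathcal{A}V_n$ bounded in $\mathbf{L}^2$ translates, through the elimination of the pressure and the lifting operators $L_0,\hat{L}_0$ entering the definition of $\mathbb{A}$, into a genuine regularity estimate of the form $\|\mathbb{P}v_n\|_{\mathbf{H}^2(\mathcal{F})} \leq C\left(\|\mathcal{A}V_n\|_{\mathcal{H}} + \|V_n\|_{\mathcal{H}}\right)$. This is where the smoothness of $\p\mathcal{F} = \p\mathcal{O} \cup \p\mathcal{S}$ and standard regularity for the stationary Stokes system enter. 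Once this estimate is in hand, the compactness is an immediate consequence of Rellich's theorem together with the finite dimensionality of the remaining components.
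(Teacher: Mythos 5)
Your argument is correct, but it follows a different route from the paper's. The paper does not invoke self-adjointness or elliptic regularity at all: it writes the resolvent equation $\mathbb{M}(\lambda\Id-\mathcal{A})V=\mathbb{F}$ as a Stokes-type boundary value problem, converts it to a variational problem $a(V,W)=l(W)$ on $\mathbf{V}^1_n\times\R^3\times\R^3$ with the coercive form $a(V,W)=\lambda\bigl(\int_{\mathcal{F}}v\cdot w+Mh'\cdot k'+I_0\omega\cdot\alpha\bigr)+2\nu\int_{\mathcal{F}}D(v):D(w)$, and applies Lax--Milgram at $\lambda=1$. This buys two things your proof has to borrow from elsewhere: (i) surjectivity of $\lambda\Id-\mathcal{A}$ at a real point is \emph{constructed} rather than deduced from the self-adjointness asserted in Proposition \ref{propsa} (whose proof is really a symmetry computation, so having an independent surjectivity argument is what closes that loop); and (ii) compactness only needs the solution to land in $\mathbf{H}^1$, which falls out of coercivity for free, so no $\mathbf{H}^2$ Stokes regularity is required. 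Your route instead locates the resolvent set via the spectrum of a self-adjoint operator and gets compactness from the stronger statement $D(\mathcal{A})\subset\mathbf{H}^2(\mathcal{F})\times\R^3\times\R^3$ together with the elliptic estimate $\|\mathbb{P}v_n\|_{\mathbf{H}^2(\mathcal{F})}\leq C(\|\mathcal{A}V_n\|_{\mathcal{H}}+\|V_n\|_{\mathcal{H}})$, which does hold here because the first component of $\mathcal{A}V$ is exactly $A_0(\mathbb{P}v-\mathbb{P}L_0h'-\mathbb{P}\hat{L}_0\omega)$ and the lifting terms are controlled by the finite-dimensional data. That is a heavier but perfectly sound mechanism, and to your credit it makes the final compactness step (Rellich plus finite-dimensionality of the rigid components) fully explicit, whereas the paper's proof stops at invertibility and leaves the compact embedding $\mathbf{V}^1_n\hookrightarrow\mathbf{V}^0_n$ implicit.
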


\begin{proof}
For $\mathbb{F} = (F,F_M,F_I)^T \in \L^2(0,\infty;\mathbf{L}^2(\mathcal{F})) \times \L^2(0,\infty;\R^3) \times \L^2(0,\infty;\R^3)$, we consider the system
\begin{eqnarray*}
\mathbb{M}\left(\lambda \Id - \mathcal{A}\right)V & = & \mathbb{F},
\end{eqnarray*}
where $V=(v,h',\omega)^T$ is the unknown. This system can be rewritten as
\begin{eqnarray*}
\lambda v - \div \ \sigma(v,p) =
F, & \quad & \textrm{in $\mathcal{F} $}, \\
\div \  v = 0, & \quad & \textrm{in $\mathcal{F}$},
\end{eqnarray*}
\begin{eqnarray*}
v  =  0 , &\quad & \textrm{in $\p \mathcal{O}$}, \\
v  =  h'(t) + \omega(t) \wedge y , & \quad & y \in \p \mathcal{S},
\end{eqnarray*}
\begin{eqnarray*}
\lambda M h'(t) = - \int_{\p \mathcal{S}} \sigma(v,p) n \d \Gamma +F_{M}, \\
\lambda I_0\omega (t) = -  \int_{\p \mathcal{S}} y \wedge \sigma(v,p) n \d \Gamma + F_{I},
\end{eqnarray*}
with
\begin{eqnarray*}
p & = & N(A_0 v) - \lambda N(h'+\omega \wedge y).
\end{eqnarray*}
By the same kind of calculations as the ones made in the proof of Proposition \ref{propsa}, this problem is equivalent to the following variational problem:
\begin{eqnarray}
\text{Find $V\in \mathbf{V}^1_n\times\R^3\times\R^3$ such that } \quad a(V,W)=l(W) \label{pbvar}
\end{eqnarray}
with $V=(v,h',\omega)^T$, $W=(w,k',\alpha)^T$, and
\begin{eqnarray*}
a(V,W) & = & \lambda \left(\int_{\mathcal{F}}v\cdot w +Mh'\cdot k' + I_0\omega\cdot \alpha \right) + 2\nu\int_{\mathcal{F}}D(v):D(w), \\
l(W) & = & \int_{\mathcal{F}}F\cdot w +F_M\cdot k' +F_I \cdot \alpha.
\end{eqnarray*}
By choosing $\lambda = 1$, we use the Lax-Milgram theorem and prove that problem \eqref{pbvar} has a unique solution. Thus $\mathbb{M}\left(\lambda\Id - \mathcal{A}\right)$ is invertible, and since $\mathbb{M}$ is positive, $\lambda\Id - \mathcal{A}$ is also invertible.
\end{proof}

The results of the two last propositions yield the following theorem.
\begin{theorem} \label{thanalsg}
The operator $(\mathcal{A}, D(\mathcal{A}))$ is the infinitesimal generator of an analytic semigroup on $\mathbf{V}^0_n(\mathcal{F}) \times \R^3 \times \R^3$, and the resolvent of $\mathcal{A}$ is compact.
\end{theorem}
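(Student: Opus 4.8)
The plan is to read the two preceding propositions as saying that $\mathcal{A}$ is a self-adjoint, non-positive operator with compact resolvent, after which the conclusion is a classical consequence of the spectral theorem. First I would replace the canonical scalar product of $\mathbf{V}^0_n(\mathcal{F}) \times \R^3 \times \R^3$ by the weighted one $\langle V, W \rangle_{\mathbb{M}} = \langle \mathbb{M} V, W \rangle$. Since $\mathbb{M} = \mathbb{M}_0 + \mathbb{M}_{add}$ is bounded (the nontrivial blocks of $\mathbb{M}_{add}$ are the finite-dimensional operators $\mathcal{C}N,\mathcal{C}\hat{N},\hat{\mathcal{C}}N,\hat{\mathcal{C}}\hat{N}$), self-adjoint, and bounded below by $\mathbb{M}_0 \geq c\,\Id$ with $c = \min(1,M,\lambda_{\min}(I_0)) > 0$ — which is exactly the content of the Lemma together with the positivity of $\mathbb{M}_0$ — the norm induced by $\langle\cdot,\cdot\rangle_{\mathbb{M}}$ is equivalent to the original one. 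Consequently, generation and analyticity proved in the weighted space transfer automatically to the original space, the two being the same topological vector space.

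In this weighted product $\mathcal{A} = \mathbb{M}^{-1}\mathbb{A}$ is symmetric: for $V, W \in D(\mathcal{A})$ one has $\langle \mathcal{A}V, W\rangle_{\mathbb{M}} = \langle \mathbb{M}\mathcal{A}V, W\rangle = \langle \mathbb{A}V, W\rangle = \langle V, \mathbb{A}W\rangle = \langle V, \mathbb{M}\mathcal{A}W\rangle = \langle V, \mathcal{A}W\rangle_{\mathbb{M}}$, where I use that $\mathbb{A} = \mathbb{M}\mathcal{A}$ is self-adjoint by Proposition \ref{propsa} and that $\mathbb{M}$ is self-adjoint. With $D(\mathcal{A})$ the domain identified in Proposition \ref{propsa}, this symmetry is in fact self-adjointness of $\mathcal{A}$ with respect to $\langle\cdot,\cdot\rangle_{\mathbb{M}}$.

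Next I would establish that $\mathcal{A}$ is bounded above. The computation carried out in the proof of the resolvent proposition shows that the problem $\mathbb{M}(\lambda\Id - \mathcal{A})V = \mathbb{F}$ is equivalent to the variational problem $a(V,W)=l(W)$ with $a(V,V) = \lambda\langle \mathbb{M}_0 V, V\rangle + 2\nu\int_{\mathcal{F}} |D(v)|^2\,\d y$. By Korn's inequality this form is coercive on $\mathbf{V}^1_n \times \R^3 \times \R^3$ for every $\lambda > 0$, so Lax--Milgram gives invertibility of $\lambda\Id - \mathcal{A}$ for all $\lambda > 0$; hence $(0,\infty) \subset \rho(\mathcal{A})$. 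Since $\mathcal{A}$ is self-adjoint its spectrum is real, so the spectrum of $\mathcal{A}$ is contained in $(-\infty,0]$, i.e. $\mathcal{A} \leq 0$ in the weighted product.

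Finally, a self-adjoint operator bounded above generates an analytic semigroup: the spectral localization in $(-\infty,0]$ yields the resolvent bound $\|(\lambda\Id - \mathcal{A})^{-1}\| \leq 1/\dist(\lambda, (-\infty,0])$, hence $\|(\lambda\Id-\mathcal{A})^{-1}\|\leq C/|\lambda|$ on any sector $\{|\arg\lambda| < \pi/2 + \delta\}$ away from the origin, which is precisely the resolvent estimate characterizing generators of analytic semigroups (and the bound above being $0$ gives contractivity). The compactness of the resolvent is exactly the second of the two propositions. The main obstacle here is not the generation step, which is classical once the abstract setting is fixed, but checking carefully the two supporting facts — the bound $\mathcal{A}\leq 0$ and the equivalence of the canonical and weighted scalar products; once $\mathcal{A}$ is recognized as a non-positive self-adjoint operator, the remainder is a direct application of the spectral theorem.
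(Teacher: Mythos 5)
Your argument is correct and is exactly the route the paper intends: the paper offers no separate proof of this theorem, merely asserting that it follows from the two preceding propositions, and your weighted inner product $\langle\mathbb{M}\cdot,\cdot\rangle$, the self-adjointness of $\mathcal{A}$ there via Proposition \ref{propsa}, the non-positivity extracted from the coercive form $a(V,V)=\lambda\langle\mathbb{M}_0V,V\rangle+2\nu\|D(v)\|^2_{\mathbf{L}^2(\mathcal{F})}$, and the standard generation theorem for non-positive self-adjoint operators are precisely the missing details. The only point worth flagging is that coercivity of $a$ on the constrained subspace requires Korn's inequality, which you correctly invoke.
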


\subsection{Abstract formulation of the control problem} \label{seccontrol}

\begin{proposition} \label{propoperator}
The triplet $(u,h',\omega)$ is solution of system \eqref{premss}--\eqref{derss} if and only if $U = (\mathbb{P}u,h',\omega)^T$ and $(\Id - \mathbb{P})u$ satisfy the operator formulation
\begin{eqnarray}
U' & = & \mathcal{A}_{\lambda} U + \mathcal{B}_{\lambda}\zeta , \label{control1} \\
\left( \Id -\mathbb{P} \right) u & = & \left( \Id -\mathbb{P} \right)\left(L_0(h') + \hat{L}_0(\omega)\right). \label{control2}
\end{eqnarray}
with $\mathcal{A}_{\lambda}  =  \mathcal{A} + \lambda \mathbb{M}^{-1}\mathbb{M}_0$, $\mathcal{B}_{\lambda} = \mathbb{M}^{-1}\mathbb{B}_{\lambda} = \mathbb{B}_{\lambda}$, and
\begin{eqnarray*}
\mathbb{B}_{\lambda} & = &
\left[ \begin{array} {c}
\left(\lambda \Id - A_0 \right)L_0 \\
0 \\
0
\end{array} \right].
\end{eqnarray*}
\end{proposition}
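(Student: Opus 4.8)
The plan is to treat the controlled system \eqref{premss}--\eqref{derss} as a perturbation of the uncontrolled one, for which the operator identities \eqref{expform0}--\eqref{expform} are already at hand, and to isolate the single new term produced by the boundary datum $\zeta$. First I would apply the Leray operator $\mathbb{P}$ to the momentum equation \eqref{premss}: since $\mathbb{P}$ is time independent and annihilates gradients, $\mathbb{P}(\p_t u)=(\mathbb{P}u)'$ and $\mathbb{P}\nabla p=0$, so that \eqref{premss} collapses to $(\mathbb{P}u)'=\lambda\mathbb{P}u+\nu\mathbb{P}\Delta u$. The whole point is then to rewrite $\nu\mathbb{P}\Delta u$ in closed form, exactly as for $\zeta=0$, while retaining the extra piece of the trace carried on $\p\mathcal{S}$.

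To do so I would subtract the Stokes liftings of the full boundary value and set $w_0=u-L_0(h')-\hat{L}_0(\omega)-L_0(\zeta)$; this field is divergence free and vanishes on $\p\mathcal{O}\cup\p\mathcal{S}$, hence lies in $D(A_0)$. Because each lifting is a Stokes field, $\nu\Delta L_0(\cdot)$ and $\nu\Delta\hat{L}_0(\cdot)$ are pure gradients killed by $\mathbb{P}$, so $\nu\mathbb{P}\Delta u=A_0w_0=A_0\bigl(\mathbb{P}u-\mathbb{P}L_0(h')-\mathbb{P}\hat{L}_0(\omega)-\mathbb{P}L_0(\zeta)\bigr)$, where I use $\mathbb{P}L_0(\zeta)=L_0(\zeta)$ since $L_0(\zeta)\in\mathbf{V}^0_n$. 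The three terms not carrying $\zeta$ reproduce precisely the fluid line of $\mathbb{A}+\lambda\mathbb{M}_0$ acting on $(\mathbb{P}u,h',\omega)^T$, that is the fluid component of $\mathcal{A}_{\lambda}U$, while the $\zeta$-term supplies the control contribution built on $A_0L_0$. Since $\mathbb{M}$ has the identity as its fluid diagonal block and $\mathbb{B}_{\lambda}$ is supported only on that block, it follows at once that $\mathbb{M}^{-1}\mathbb{B}_{\lambda}=\mathbb{B}_{\lambda}$, i.e. $\mathcal{B}_{\lambda}=\mathbb{B}_{\lambda}$.

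For the two rigid lines \eqref{cinqss}--\eqref{sixss} I would repeat the pressure elimination of Proposition \ref{propsa}, writing $p=N(A_0u)-\lambda N(h'+\omega\wedge y)$ and integrating $\sigma(u,p)n$ on $\p\mathcal{S}$; the boundary integrals close into $\mathcal{C},\hat{\mathcal{C}},N,\hat{N}$ and rebuild the added-mass block $\mathbb{M}_{add}$, and one checks that no term in $\zeta$ survives there. This is the structural reason why $\mathbb{B}_{\lambda}$ has vanishing second and third components. Equation \eqref{control2} is recovered verbatim from \eqref{expform}, observing that $L_0(\zeta)\in\mathbf{V}^0_n$ contributes nothing to $(\Id-\mathbb{P})u$, which is exactly why the control is absent there. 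The converse implication is obtained by running the same algebra backwards: from $U$ and $(\Id-\mathbb{P})u$ solving \eqref{control1}--\eqref{control2} I reconstruct $u=\mathbb{P}u+(\Id-\mathbb{P})u$, recover $p$ from the Neumann problems, and verify that the trace on $\p\mathcal{S}$ is restored precisely by the lifting hidden in $\mathcal{B}_{\lambda}\zeta$.

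The step I expect to be the genuine obstacle is giving a rigorous meaning to the control operator and checking that the $\zeta$-contribution is exactly $\mathbb{B}_{\lambda}\zeta$ with the advertised factor $(\lambda\Id-A_0)$. Since $L_0(\zeta)$ does not vanish on $\p\mathcal{S}$ it does not belong to $D(A_0)$, so $A_0L_0(\zeta)$ is not an element of $\mathbf{V}^0_n$ and must be read in the extrapolated space $\bigl(D(A_0^{\ast})\bigr)'$; equation \eqref{control1} therefore has to be understood in that larger space, and the bookkeeping of the shift by $\lambda\Id$, which pairs with the resolvent term $-\lambda u$ of \eqref{premss}, is the delicate point to be carried out in the spirit of \cite{JPR1,JPR3}. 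Everything else amounts to collecting the integrations by parts already performed for $\zeta=0$ and verifying that they leave no spurious coupling between $\zeta$ and the solid unknowns.
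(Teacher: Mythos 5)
Your route---applying $\mathbb{P}$ directly to \eqref{premss} and reading the control term off the extrapolated Stokes operator---is genuinely different from the paper's. The paper never projects the original equation: it lifts the boundary datum by setting $u=v+L_0\zeta$, writes the system satisfied by $V=(\mathbb{P}v,h',\omega)^T$ with source terms $\mathbb{B}^{(1)}\dot{\zeta}+\mathbb{B}^{(0)}_{\lambda}\zeta$, and only identifies $\mathcal{B}_{\lambda}$ at the very end, by an integration by parts in time inside the Duhamel formula, which converts the $\dot{\zeta}$-dependence into a $\zeta$-dependence. Your fluid line is essentially sound modulo the extrapolation issue you correctly flag, and your treatment of \eqref{control2} is fine. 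The gap is in the rigid-body lines and in the identification of $\mathbb{B}_{\lambda}$.

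The claim that ``no term in $\zeta$ survives'' in \eqref{cinqss}--\eqref{sixss} is not correct, and it is not the structural reason why the last two components of $\mathbb{B}_{\lambda}$ vanish. The formula $p=N(A_0u)-\lambda N(h'+\omega\wedge y)$ you invoke is the stationary one from the resolvent computation; for the evolution problem the pressure carries a piece $-\p_t q$ with $\p q/\p n=u\cdot n=(h'+\omega\wedge y+\zeta)\cdot n$ on $\p\mathcal{S}$, so $\int_{\p\mathcal{S}}\sigma(u,p)n\,\d\Gamma$ does receive a contribution in $\dot{\zeta}$, as well as the viscous stress of the lifting $L_0\zeta$. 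In the paper these contributions appear explicitly as the nonzero second and third components of $\mathbb{B}^{(0)}_{\lambda}$, namely $\mathcal{C}(-2\nu D+NA_0)L_0$ and $\hat{\mathcal{C}}(-2\nu D+NA_0)L_0$, and they drop out of $\mathcal{B}_{\lambda}$ only because, after the time integration by parts, they cancel exactly against the corresponding components of $\mathbb{A}\mathbb{B}^{(1)}$. Without that step (or an equivalent cancellation argument) you have not shown that $\mathcal{B}_{\lambda}$ is supported in the fluid component, nor given meaning to the $\dot{\zeta}$-terms when $\zeta$ is merely $\L^2$ in time. A second, related point: your fluid computation yields $-A_0L_0\zeta$ as the control contribution, not $(\lambda\Id-A_0)L_0\zeta$; the resolvent term $-\lambda u$ cannot supply the missing $\lambda L_0\zeta$, since $\lambda\mathbb{P}u$ is already entirely absorbed in $\lambda\mathbb{M}^{-1}\mathbb{M}_0U$. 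In the paper that term is produced in the Duhamel bookkeeping through $\mathcal{A}_{\lambda}\mathbb{B}^{(1)}=\mathbb{M}^{-1}(\mathbb{A}+\lambda\mathbb{M}_0)\mathbb{B}^{(1)}$ and the asserted identity $\mathbb{M}_0\mathbb{B}^{(1)}=0$; you must either reproduce that computation or reconcile the discrepancy explicitly---deferring it to ``delicate bookkeeping'' leaves the announced formula for $\mathbb{B}_{\lambda}$ unproved.
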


\begin{proof}
Let us first remind that $\zeta$ must obey an incompressibility constraint given by
\begin{eqnarray*}
\int_{\p \mathcal{S}} \zeta \cdot n \d \Gamma & = & 0.
\end{eqnarray*}
Thus we can formally extend $\zeta$ in the whole domain $\mathcal{O}$ while assuming that $\div \ \zeta = 0$ in $\mathcal{F}$. That is why we can make the control appear only in the equation \eqref{control1}, the one which deals with $\mathbb{P}u$. Let us remind the linear system \eqref{premss}--\eqref{derss}:
\begin{eqnarray*}
\frac{\p u}{\p t} - \lambda u - \div \ \sigma(u,p) & = &
0, \quad \textrm{in $\mathcal{F}\times (0,\infty)$}, \\
\div \  u & = & 0, \quad \textrm{in $\mathcal{F}\times (0,\infty)$},
\end{eqnarray*}
\begin{eqnarray*}
u  =  0 , &\quad & \textrm{in $\p \mathcal{O}\times (0,\infty)$}, \\
u  =  h'(t) + \omega(t) \wedge y + \zeta(y,t) , & \quad & y \in \p \mathcal{S} ,\quad t\in (0,\infty),
\end{eqnarray*}
\begin{eqnarray*}
M h''-\lambda M h' = - \int_{\p \mathcal{S}} \sigma(u,p) n \d \Gamma , & \quad & \text{in } (0,\infty), \\
I_0\omega' - \lambda I_0\omega  = -  \int_{\p \mathcal{S}} y \wedge \sigma(u,p) n \d \Gamma  , & \quad & \text{in } (0,\infty),
\end{eqnarray*}
\begin{eqnarray*}
u(y,0)  =  u_0 (y), \  y \in \mathcal{F}, \quad
h'(0)=h_1 \in \R^3 ,\quad \omega(0) = \omega_0 \in \R^3.
\end{eqnarray*}
The boundary condition on $\p \mathcal{S}$ can be tackled by using a lifting method, like in \cite{Court} for instance. It consists in splitting the velocity $u = v + \w$ and the pressure $p = q + \pi$, so that we have
\begin{eqnarray*}
 -  \nu \Delta \w + \nabla \pi = 0, &  & \textrm{in
$\mathcal{F}$},  \\
\div \  \w = 0 , &  & \textrm{in $\mathcal{F}$},  \\
\w  =  \zeta , &  & \textrm{on $\p \mathcal{S}$}, \\
\w = 0 , &  & \textrm{on $\p \mathcal{O}$},
\end{eqnarray*}
and
\begin{eqnarray*}
\frac{\p v}{\p t} - \lambda v - \div \ \sigma(v,q)  =  -\frac{\p \w}{\p t}+\lambda \w, & \quad & \textrm{in $\mathcal{F}\times (0,\infty)$},\\
\div \  v  =  0, & \quad & \textrm{in $\mathcal{F}\times (0,\infty)$},
\end{eqnarray*}
\begin{eqnarray*}
v  =  0 , &\quad & \textrm{in $\p \mathcal{O}\times (0,\infty)$}, \\
v  =  h'(t) + \omega(t) \wedge y , & \quad & y \in \p \mathcal{S} ,\quad t\in (0,\infty),
\end{eqnarray*}
\begin{eqnarray*}
M h''(t)-\lambda M h'(t) = - \int_{\p \mathcal{S}} \sigma(v,q) n \d \Gamma - \int_{\p \mathcal{S}} \sigma(\w,\pi) n \d \Gamma,
&  & \text{in } (0,\infty), \\
I_0\omega'(t) - \lambda I_0\omega (t) = -  \int_{\p \mathcal{S}} y \wedge \sigma(v,q) n \d \Gamma -
\int_{\p \mathcal{S}} y \wedge \sigma(\w,\pi) n \d \Gamma, &  & \text{in }  t\in(0,\infty),
\end{eqnarray*}
\begin{eqnarray*}
v(y,0) & = & u_0 (y)-\w(y,0), \  y \in \mathcal{F}, \quad h'(0)=h_1 \in \R^3 ,\quad \omega(0) = \omega_0 \in \R^3.
\end{eqnarray*}
This system can be formulated as follows
\begin{eqnarray*}
\mathbb{M} V' & = & \mathbb{A} V +\lambda \mathbb{M}_0 V
+ \mathbb{B}^{(1)}\dot{\zeta} + \mathbb{B}_{\lambda}^{(0)}\zeta, \\
\left( \Id -\mathbb{P} \right) u & = & \left( \Id -\mathbb{P} \right)\left(L_0(h') + \hat{L}_0(\omega)\right),
\end{eqnarray*}
with $V = (\mathbb{P}v, h',\omega)$, $\dot{\zeta} = \frac{\p \zeta}{\p t}$ and
\begin{eqnarray*}
\mathbb{B}^{(1)}  =
\left[ \begin{array} {c}
-L_0 \\
0 \\
0
\end{array} \right],
& \quad &
\mathbb{B}_{\lambda}^{(0)}  =
\left[ \begin{array} {c}
\lambda L_0 \\
\displaystyle \mathcal{C}(-2\nu D +NA_0)L_0 \\
\displaystyle \hat{\mathcal{C}}(-2\nu D +NA_0)L_0
\end{array} \right]
.
\end{eqnarray*}
Notice that $\mathbb{M}^{-1}\mathbb{B}^{(1)} =\mathbb{B}^{(1)}$. The Duhamel's formula gives
\begin{eqnarray*}
V(t) & = & e^{t\mathcal{A}_{\lambda}}\left(U(0)+\mathbb{B}^{(1)}\zeta(\cdot,0)\right)
+ \int_0^t e^{(t-s)\mathcal{A}_{\lambda}}\left(\mathbb{M}^{-1}\mathbb{B}_{\lambda}^{(0)}\zeta+ \mathbb{B}^{(1)}\dot{\zeta} \right)\d s,
\end{eqnarray*}
and an integration by parts leads to
\begin{eqnarray*}
\int_0^t e^{(t-s)\mathcal{A}_{\lambda}}\mathbb{B}^{(1)}\dot{\zeta} \d s & = &
\mathbb{B}^{(1)} \zeta(\cdot,t) - e^{t \mathcal{A}_{\lambda}} \mathbb{B}^{(1)} \zeta(\cdot,0)
+ \int_0^t e^{(t-s)\mathcal{A}_{\lambda}}\mathcal{A}_{\lambda}\mathbb{B}^{(1)}\zeta \d s.
\end{eqnarray*}
Notice that $\mathbb{M}_0\mathbb{B}^{(1)} = 0$ and that
\begin{eqnarray*}
\mathbb{A} \mathbb{B}^{(1)} & = &
\left[ \begin{array} {c}
-A_0 L_0 \\
-\displaystyle \mathcal{C}(-2\nu D +NA_0)L_0 \\
-\displaystyle \hat{\mathcal{C}}(-2\nu D +NA_0)L_0
\end{array} \right].
\end{eqnarray*}
Thus we have
\begin{eqnarray*}
U(t) & = & e^{t\mathcal{A}_{\lambda}}U(0) +
\int_0^t e^{(t-s)\mathcal{A}_{\lambda}}\mathbb{M}^{-1}\left(\mathbb{B}_{\lambda}^{(0)} + \mathbb{A}\mathbb{B}^{(1)}\right)\zeta \d s, \\
U'(t) & = & \mathcal{A}_{\lambda} U(t) + \mathcal{B}_{\lambda} \zeta(\cdot,t) \quad \text{with } \mathcal{B}_{\lambda} = \mathbb{M}^{-1}\left(\mathbb{B}_{\lambda}^{(0)} + \mathbb{A}\mathbb{B}^{(1)}\right),
\end{eqnarray*}
and finally system \eqref{premss}--\eqref{derss} can be expressed formally in the form given by \eqref{control1}--\eqref{control2}.
\end{proof}

\subsection{Regularity of solutions of the linearized system}
\begin{proposition}
For $T>0$, $(u_0,h_1,\omega_0) \in \mathbf{H}^1_{cc}$ and $\zeta \in \L^2(0,\infty;\mathbf{H}^{3/2}(\p \mathcal{S}))$ \textcolor{black}{satisfying $\displaystyle \int_{\p \mathcal{S}} \zeta \cdot n \d \Gamma  =  0,
$} system \eqref{lin1}--\eqref{lin10} admits a unique solution $(U,H',\Omega)$ such that
\begin{eqnarray*}
U  \in  \H^{2,1}(Q_0^{T}), \quad H'  \in  \H^1(0,T;\R^3), \quad \Omega  \in  \H^1(0,T;\R^3).
\end{eqnarray*}
\end{proposition}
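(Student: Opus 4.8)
The plan is to read the system through the operator formulation of Proposition~\ref{propoperator} and to reduce the regularity question to $\L^2$-maximal regularity for the analytic semigroup generated by $\mathcal{A}_{\lambda}$. On the finite interval $(0,T)$ the exponential weight $e^{\lambda t}$ and its inverse are bounded, so multiplication by $e^{\pm \lambda t}$ preserves $\H^{2,1}(Q_0^T)$ and $\H^1(0,T;\R^3)$; hence it suffices to prove the asserted regularity for the weighted system \eqref{premss}--\eqref{derss}, whose abstract form \eqref{control1}--\eqref{control2} is already available. First I would record, using the Duhamel representation established in the proof of Proposition~\ref{propoperator}, that $U=(\mathbb{P}u,h',\omega)^T$ solves $U'=\mathcal{A}_{\lambda}U+\mathcal{B}_{\lambda}\zeta$ with $U(0)=(\mathbb{P}u_0,h_1,\omega_0)$, the integration by parts there having removed the term carrying $\dot\zeta$, so that only $\zeta$ itself (and not its time derivative) enters the forcing.

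\textbf{Regularity of the forcing.} The crucial observation is that, in this formulation, the nominally unbounded boundary control becomes an admissible $\L^2$-in-time forcing valued in the state space $\mathcal{H}:=\mathbf{V}^0_n(\mathcal{F})\times\R^3\times\R^3$. Indeed $\mathcal{B}_{\lambda}\zeta=\mathbb{M}^{-1}\big((\lambda\,\Id-A_0)L_0\zeta,0,0\big)^T$, and by the elliptic (Stokes) regularity of the lifting operator $L_0$, a boundary datum $\zeta\in\mathbf{H}^{3/2}(\p\mathcal{S})$ with $\int_{\p\mathcal{S}}\zeta\cdot n\,\d\Gamma=0$ produces $L_0\zeta\in\mathbf{H}^2(\mathcal{F})\cap\mathbf{V}^0_n(\mathcal{F})$. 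Consequently $A_0L_0\zeta=\nu\mathbb{P}\Delta(L_0\zeta)\in\mathbf{V}^0_n(\mathcal{F})$, and since $\mathbb{M}^{-1}$ is bounded we obtain $\mathcal{B}_{\lambda}\zeta\in\L^2(0,T;\mathcal{H})$. This is precisely where the hypothesis $\zeta\in\L^2(0,\infty;\mathbf{H}^{3/2}(\p\mathcal{S}))$ is consumed, and it is the payoff of the integration by parts.

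\textbf{Maximal regularity.} By Theorem~\ref{thanalsg} the operator $\mathcal{A}$ generates an analytic semigroup on $\mathcal{H}$, and $\mathcal{A}_{\lambda}=\mathcal{A}+\lambda\mathbb{M}^{-1}\mathbb{M}_0$ differs from it by a bounded operator, hence is itself the generator of an analytic semigroup with the same domain $D(\mathcal{A})$. On a Hilbert space the generator of an analytic semigroup enjoys $\L^2$-maximal regularity; applied to $U'=\mathcal{A}_{\lambda}U+f$ with $f=\mathcal{B}_{\lambda}\zeta\in\L^2(0,T;\mathcal{H})$ and initial datum $U(0)$ in the trace space $\big(\mathcal{H},D(\mathcal{A})\big)_{1/2,2}$, this yields a unique $U\in\L^2(0,T;D(\mathcal{A}))\cap\H^1(0,T;\mathcal{H})$. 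By Proposition~\ref{propsa} one has $D(\mathcal{A})=\mathbf{H}^1_{cc}\cap(\mathbf{H}^2(\mathcal{F})\times\R^3\times\R^3)$, so that $\mathbb{P}u\in\H^{2,1}(Q_0^T)$ and $h',\omega\in\H^1(0,T;\R^3)$.

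\textbf{Recovery of the full velocity, uniqueness, and main obstacle.} It remains to reconstruct the whole fluid velocity from its solenoidal part. By the algebraic relation \eqref{control2}, $(\Id-\mathbb{P})u=(\Id-\mathbb{P})(L_0h'+\hat L_0\omega)$; since $L_0,\hat L_0$ map $\R^3$ boundedly into $\mathbf{H}^2(\mathcal{F})$ and $h',\omega\in\H^1(0,T;\R^3)$, this term also lies in $\H^{2,1}(Q_0^T)$, and adding $\mathbb{P}u$ gives $U\in\H^{2,1}(Q_0^T)$. Uniqueness is inherited from the uniqueness of the abstract solution together with the fact that $(\Id-\mathbb{P})u$ is then determined algebraically. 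The step I expect to require the most care is the identification of the initial-data space: one must check that the trace space $\big(\mathcal{H},D(\mathcal{A})\big)_{1/2,2}$ of the maximal-regularity class coincides, up to equivalence of norms, with $\mathbf{H}^1_{cc}$, i.e. that the compatibility conditions $\div u_0=0$, $u_0=0$ on $\p\mathcal{O}$ and $u_0=h_1+\omega_0\wedge y$ on $\p\mathcal{S}$ encoded in $\mathbf{H}^1_{cc}$ are exactly those selected by real interpolation of order $1/2$ between $\mathcal{H}$ and $D(\mathcal{A})$. This, rather than the semigroup theory itself, is the genuinely technical point.
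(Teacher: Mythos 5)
Your overall route---pass to the operator formulation \eqref{control1}--\eqref{control2} of Proposition \ref{propoperator} and invoke parabolic maximal regularity for the analytic semigroup of Theorem \ref{thanalsg}---is the same as the paper's, which simply defers to Proposition 3.3 of \cite{Tucsnak}. However, the step on which your argument pivots, namely that $\mathcal{B}_{\lambda}\zeta$ is a forcing in $\L^2(0,T;\mathbf{V}^0_n(\mathcal{F})\times\R^3\times\R^3)$ because $A_0L_0\zeta=\nu\mathbb{P}\Delta(L_0\zeta)\in\mathbf{V}^0_n(\mathcal{F})$, does not hold in the intended sense. If $A_0L_0\zeta$ is read as the classical operator $\nu\mathbb{P}\Delta$ applied to the $\mathbf{H}^2$ function $L_0\zeta$, then it vanishes: the Stokes system defining $L_0$ gives $\nu\Delta(L_0\zeta)=\nabla\psi$ with $\psi\in\H^1(\mathcal{F})$, and $\mathbb{P}\nabla\psi=0$ by definition of the Leray projector. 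With that reading one would get $\mathcal{B}_0\zeta=0$, so for $\lambda=0$ the control would not act on the projected dynamics at all, contradicting Theorem \ref{thapproxcont}. The operator $A_0$ in $\mathcal{B}_{\lambda}=(\lambda\,\Id-A_0)L_0$ must therefore be understood as the extension of the Stokes operator to $(D(A_0))'$ (as in \cite{JPR1}): since $L_0\zeta$ does not vanish on $\p\mathcal{S}$ it is not in $D(A_0)$, and the extended $A_0L_0\zeta$ carries precisely the boundary contribution that makes $\mathcal{B}_{\lambda}$ an \emph{unbounded} control operator. Hence the forcing is not square-integrable in time with values in the state space, and maximal regularity cannot be applied in the way you propose.

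A second symptom of the same problem is that your conclusion $U\in\L^2(0,T;D(\mathcal{A}))$ is impossible whenever $\zeta\not\equiv0$: by Proposition \ref{propsa}, membership in $D(\mathcal{A})$ forces the compatibility condition $u=h'+\omega\wedge y$ on $\p\mathcal{S}$, whereas the actual solution satisfies $u=h'+\omega\wedge y+\zeta$ there. The repair is the one already carried out in the proof of Proposition \ref{propoperator}: lift the boundary datum, set $v=u-L_0\zeta$, and apply maximal regularity (or the cited Proposition 3.3 of \cite{Tucsnak}) to the system satisfied by $V=(\mathbb{P}v,h',\omega)^T$, whose boundary condition is compatible with $D(\mathcal{A})$ and whose right-hand side is built from the lifted Stokes flow $L_0\zeta$, the associated hydrodynamical forces on $\p\mathcal{S}$, and the time derivative $\p_t(L_0\zeta)$. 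The genuinely delicate point is then the presence of $\dot{\zeta}$ in that source term, which must be handled (for instance through the integration by parts in the Duhamel formula already performed in Proposition \ref{propoperator}, or by a density argument with estimates independent of $\dot{\zeta}$) if one wants to conclude under the stated hypothesis $\zeta\in\L^2(0,\infty;\mathbf{H}^{3/2}(\p\mathcal{S}))$ alone; this, rather than the interpolation identification of the trace space you flag at the end, is where the real work lies.
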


\begin{proof}
\textcolor{black}{Note that the formulation provided by Proposition \ref{propoperator} makes sense when we have only $\zeta \in \L^2(0,\infty;\mathbf{H}^{3/2}(\p \mathcal{S}))$.} With regards to the formulation \eqref{control1}--\eqref{control2} and the properties of the operator $\mathcal{A}$, this result can be deduced from Proposition 3.3 of \cite{Tucsnak}.
\end{proof}

\section{Approximate controllability of the homogeneous linear system} \label{secapproxcont}
In order to prove that system \eqref{lin1}--\eqref{lin10} is exponentially stabilizable, let us first show that it is approximatively controllable.
\begin{theorem} \label{thapproxcont}
System \eqref{lin1}--\eqref{lin10} is approximately controllable, in the space $\mathbf{H}^0_{cc}$ by boundary velocities $\zeta \in \L^2(0,\infty;\mathbf{H}^{3/2}(\p \mathcal{S}))$ satisfying
\begin{eqnarray*}
\int_{\p \mathcal{S}} \zeta \cdot n \d \Gamma & = & 0.
\end{eqnarray*}
\end{theorem}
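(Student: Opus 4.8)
The plan is to establish approximate controllability by duality, reducing it to a unique continuation property for the adjoint stationary system. The generator of \eqref{lin1}--\eqref{lin10} is the operator $\mathcal{A}$, which by Proposition \ref{propsa} is self-adjoint with respect to the $\mathbb{M}$-weighted inner product (since $\mathbb{A}=\mathbb{M}\mathcal{A}$ is self-adjoint and $\mathbb{M}$ is positive), and which by Theorem \ref{thanalsg} generates an analytic semigroup with compact resolvent; in particular its spectrum is real and discrete. The exponential change of unknowns relating \eqref{lin1}--\eqref{lin10} to \eqref{premss}--\eqref{derss} affects only the decay rate and not approximate controllability, so I may work with the homogeneous operator. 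Because the semigroup is analytic and the resolvent compact, approximate controllability is equivalent, by the Fattorini--Hautus criterion, to the statement that no nonzero eigenvector $\Phi=(\phi,k,r)^T$ of the (self-adjoint) generator lies in $\ker \mathcal{B}^{\ast}$. So I would fix an eigenvalue $\mu$ and an eigenvector $\Phi$ with $\mathcal{B}^{\ast}\Phi=0$, and prove $\Phi=0$.

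Exploiting the self-adjointness, $\Phi$ solves the stationary coupled problem
\begin{eqnarray*}
\mu \phi - \div\ \sigma(\phi,\pi) = 0, & \quad & \div\ \phi = 0 \quad \text{in } \mathcal{F}, \\
\phi = 0 \ \text{ on } \p \mathcal{O}, & \quad & \phi = k + r\wedge y \ \text{ on } \p \mathcal{S}, \\
\mu M k = - \int_{\p \mathcal{S}}\sigma(\phi,\pi)n\,\d\Gamma, & \quad & \mu I_0 r = - \int_{\p \mathcal{S}} y\wedge\sigma(\phi,\pi)n\,\d\Gamma.
\end{eqnarray*}
Next I would identify the condition $\mathcal{B}^{\ast}\Phi=0$. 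Since the admissible controls are the $\zeta\in\mathbf{H}^{3/2}(\p\mathcal{S})$ with $\int_{\p\mathcal{S}}\zeta\cdot n\,\d\Gamma=0$, the adjoint of the control operator produces the boundary stress $\sigma(\phi,\pi)n$ on $\p\mathcal{S}$ tested against this zero-flux space, so $\mathcal{B}^{\ast}\Phi=0$ means that $\sigma(\phi,\pi)n$ is orthogonal to every zero-flux field, i.e. $\sigma(\phi,\pi)n=c\,n$ on $\p\mathcal{S}$ for some constant $c$. Absorbing $c$ into the pressure (which is defined only up to a constant) I may assume
\begin{eqnarray*}
\sigma(\phi,\pi)n = 0 \quad \text{on } \p \mathcal{S}.
\end{eqnarray*}

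The heart of the argument is then a dichotomy driven by the energy identity. Testing the eigenproblem against $\Phi$ in the $\mathbb{M}$-inner product and using the computation of Proposition \ref{propsa} gives $\mu\langle\Phi,\Phi\rangle_{\mathbb{M}}=-2\nu\int_{\mathcal{F}}|D(\phi)|^2\,\d y$, so $\mu\le 0$. If $\mu=0$, then $D(\phi)=0$, hence $\phi$ is an infinitesimal rigid displacement; vanishing on the full boundary $\p\mathcal{O}$ forces $\phi\equiv 0$, and then $k=r=0$ follows from $\phi=k+r\wedge y$ on $\p\mathcal{S}$. If $\mu\neq 0$, the solid equations together with $\sigma(\phi,\pi)n=0$ give $\mu M k=0$ and $\mu I_0 r=0$, whence $k=0$ and $r=0$ (the integrals of $n$ and of $y\wedge n$ over the closed surface $\p\mathcal{S}$ vanish in any case); consequently $\phi=0$ on $\p\mathcal{S}$ as well, so $\phi$ solves the stationary Stokes system in $\mathcal{F}$ with \emph{homogeneous} Cauchy data on $\p\mathcal{S}$, namely $\phi=0$ and $\sigma(\phi,\pi)n=0$.

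The hard part, and the only genuinely PDE-theoretic step, is to deduce $\phi\equiv 0$ from these homogeneous Cauchy data on the open portion $\p\mathcal{S}$ of the boundary. This is precisely the unique continuation property for the stationary generalized Stokes system, which I would invoke in the form due to Fabre--Lebeau: a divergence-free field satisfying $\mu\phi-\nu\Delta\phi+\nabla\pi=0$ and vanishing together with its Cauchy stress on a nonempty open subset of the boundary is identically zero. Granting this, $\phi\equiv 0$ in $\mathcal{F}$, hence $\pi$ is constant and $\Phi=0$, contradicting $\Phi\neq 0$; the Hautus condition holds and approximate controllability follows. I expect the two delicate points to be the precise computation of $\mathcal{B}^{\ast}$ (establishing that the observation is the boundary stress modulo the one-dimensional space $\mathbb{R}n$) and the citation of a unique continuation result valid for the present smooth geometry.
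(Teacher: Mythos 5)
Your proposal is correct in substance, and it isolates exactly the right analytic core (the Fabre--Lebeau unique continuation for the over-determined resolvent Stokes problem with $v=0$ on $\p\mathcal{F}$ and $\sigma(v,p)n=0$ on $\p\mathcal{S}$), but it follows a genuinely different route from the paper. The paper deliberately avoids the operator/adjoint formulation: it works directly with the definition of approximate controllability, takes $(\phi^T,k'^T,r^T)\in R(T)^{\bot}$, introduces the time-dependent adjoint system \eqref{premsadj}--\eqref{dersadj}, and obtains $\sigma(\phi,\psi)n=C(t)n$ on $\p\mathcal{S}$ by integration by parts; it then differentiates in time, expands $\phi_t$ in Stokes eigenfunctions following Osses--Puel to reduce to the stationary unique continuation statement, and finally disposes of the remaining stationary part ($\phi=k'+r\wedge y$ on $\p\mathcal{S}$) by an energy estimate and the rigidity lemma of Temam. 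You instead invoke the Fattorini--Hautus eigenvector criterion, which is legitimate here precisely because Proposition \ref{propsa} and Theorem \ref{thanalsg} give a generator that is self-adjoint for the $\mathbb{M}$-weighted inner product with compact resolvent (hence real discrete spectrum and complete eigenbasis); your dichotomy $\mu=0$ versus $\mu\neq 0$ then plays exactly the role of the paper's separation between the stationary remainder and the time-differentiated part, and both branches are handled correctly (the energy identity $\mu\langle\mathbb{M}\Phi,\Phi\rangle=-2\nu\int_{\mathcal{F}}|D(\phi)|^2$, rigidity on $\p\mathcal{O}$, and Fabre--Lebeau, respectively). What your route buys is economy: no eigenfunction expansion of the adjoint evolution and no separate treatment of a stationary system. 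What it costs is the point you yourself flag and that the author flags in the remark following the theorem (``proving the result \dots by using the classical characterization of approximate controllability \dots would lead to other difficulties''): the formulation \eqref{control1}--\eqref{control2} is only formal, the control operator is unbounded, and the full state contains the algebraically determined component $(\Id-\mathbb{P})u$, so the identification $\mathcal{B}^{\ast}\Phi=\left.\sigma(\phi,\pi)n\right|_{\p\mathcal{S}}$ modulo $\R n$ must be justified --- in practice by the very integration by parts the paper performs at the PDE level, which is why the two proofs meet at \eqref{condunique0}. If you carry out that duality computation carefully (or simply quote it from the paper's proof), your argument is complete.
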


\begin{proof}
Note that the operator formulation given by Proposition \ref{propoperator} does not enable us to write system \eqref{premss}--\eqref{derss} as an evolution equation. Instead of exploiting this operator formulation, we directly use the writing \eqref{premss}--\eqref{derss} \textcolor{black}{with $\lambda = 0$} and the definition of approximate controllability, as it is done in \cite{JPR3}.\\
Let us show that if $(u_0,h_1, \omega_0)=(0,0,0)$ then the reachable set $R(T)$ at time $T$, when the control $\zeta$ describes $\L^2(0,\infty;\mathbf{H}^{3/2}(\p \mathcal{S}))$ with the compatibility condition
\begin{eqnarray*}
\int_{\p \mathcal{S}} \zeta \cdot n \d \Gamma & = & 0,
\end{eqnarray*}
is dense in the space $\mathbf{L}^2(\mathcal{F})\times \R^3 \times \R^3$ that we endow with the scalar product
\begin{eqnarray*}
\langle (u,h',\omega);(\phi,k',r)\rangle & = & \int_{\mathcal{F}}u\cdot \phi + Mh'\cdot k' + I_0\omega\cdot r.
\end{eqnarray*}
For that, let $(\phi^T,k'^T,r^T)$ be in $R(T)^{\bot}$. We want to show that $(\phi^T,k'^T,r^T)=(0,0,0)$.\\
Let us introduce the adjoint system
\begin{eqnarray}
-\frac{\p \phi}{\p t}  - \div \ \sigma(\phi,\psi) = 0, & \quad & \textrm{in $\mathcal{F}\times (0,T)$}, \label{premsadj}\\
\div \  \phi = 0, & \quad & \textrm{in $\mathcal{F}\times (0,T)$},
\end{eqnarray}
\begin{eqnarray}
\phi  =  0 , &\quad & \textrm{in $\p \mathcal{O}\times (0,T)$}, \label{troisadj}\\
\phi  =  k'(t) + r(t) \wedge y  , & \quad & y \in \ \p \mathcal{S} ,\quad t\in (0,T), \label{quatreadj}
\end{eqnarray}
\begin{eqnarray}
-M k''(t) = - \int_{\p \mathcal{S}} \sigma(\phi,\psi) n \d \Gamma , & \quad & t\in (0,T),  \label{cinqsadj}\\
-I_0r'(t) = -  \int_{\p \mathcal{S}} y \wedge \sigma(\phi,\psi) n \d \Gamma , & \quad & t\in(0,T),  \label{sixsadj}
\end{eqnarray}
\begin{eqnarray}
\phi(y,T) & = & \phi^T (y), \  y \in \mathcal{F}, \quad 
k'(T)=k'^T \in \R^3 ,\quad r(T) = r^T \in \R^3. \label{dersadj}
\end{eqnarray}
By integrations by parts from systems \eqref{premss}--\eqref{derss} and \eqref{premsadj}--\eqref{dersadj}, we obtain
\begin{eqnarray*}
 \int_{\mathcal{F}}u(T)\cdot \phi^T + Mh'(T)\cdot k'^T+I_0\omega(T)\cdot r^T & = &
 -\int_0^T \int_{\p \mathcal{S}}\zeta \cdot \sigma(\phi,\psi)n \d \Gamma \d t.
\end{eqnarray*}
Thus we deduce that if $(\phi^T,k'^T,r^T)\in R(T)^{\bot}$, then we have
\begin{eqnarray}
\int_0^T  \int_{\p \mathcal{S}}\zeta \cdot \sigma(\phi,\psi)n \d \Gamma \d t & = & 0 \label{contunique}
\end{eqnarray}
for all $\zeta \in \L^2(0,\infty;\mathbf{H}^{3/2}(\p \mathcal{S}))$ such that $\displaystyle \int_{\p \mathcal{S}} \zeta \cdot n \d \Gamma  =  0$. This is equivalent to say that there exists a constant $C(t)$ such that
\begin{eqnarray}
\sigma(\phi, \psi)n  =  C(t)n & \quad & \text{on } \p \mathcal{S}. \label{condunique0}
\end{eqnarray}
We can consider $\overline{\psi} = \psi - C$ instead of $\psi$, which does not modify the system \eqref{premsadj}--\eqref{dersadj}.
Thus we get $\sigma(\phi, \psi)n =0$ in $\L^2(0,T;\mathbf{L}^2(\p \mathcal{S}))$. Then equations \eqref{cinqsadj} and \eqref{sixsadj} become
\begin{eqnarray}
k'' = 0, & \quad & r' = 0. \label{null1}
\end{eqnarray}
Now, let us look at $\phi_t = \displaystyle \frac{\p \phi}{\p t}$ and $\psi_t = \displaystyle \frac{\p \psi}{\p t}$ with the condition
\begin{eqnarray}
\sigma(\phi_t, \psi_t)n  =  0 & \quad & \text{on } \p \mathcal{S} , \label{condunique}
\end{eqnarray}
and the following homogeneous Dirichlet condition which is deduced from \eqref{troisadj} and \eqref{null1}
\begin{eqnarray*}
\phi_t  =  0 & \quad & \textrm{on $\p \mathcal{F} = \p \mathcal{O} \cup \p \mathcal{S}$}.
\end{eqnarray*}
We can use an expansion of the solution $\phi_t$ of system
\begin{eqnarray*}
-\frac{\p \phi_t}{\p t}  - \div \ \sigma(\phi_t,\psi_t) = 0, & &  \textrm{in $\mathcal{F}\times (0,T)$}, \\
\div \  \phi_t  =  0, & &  \textrm{in $\mathcal{F}\times (0,T)$}, \\
\phi_t  =  0 , & &   \textrm{in $\p \mathcal{F}\times (0,T)$},\\
\sigma(\phi_t, \psi_t)n  =  0, & &   \textrm{in $\p \mathcal{S}\times (0,T)$},
\end{eqnarray*}
in terms of the eigenfunctions of the Stokes operator, similarly as it is done in \cite{Osses1} \textcolor{black}{(see Theorem 3.1, the first part of the proof, whose arguments are valid for general geometries of $\mathcal{F}$). More precisely, considering a sequence $(\mu_j)_{j\geq 1}$ of eigenvalues of the Stokes operator, if we decompose $\phi_t(T)$ as follows
\begin{eqnarray*}
\phi_t(T) & = & \sum_{j \geq 1} a_j v_j,
\end{eqnarray*}
where the functions $v_j$ are the eigenfunctions satisfying
\begin{eqnarray*}
 - \div \ \sigma(v_j,p_j) = \mu_j v_j, & &  \textrm{in $\mathcal{F}\times (0,T)$}, \\
\div \  v_j  =  0, & &  \textrm{in $\mathcal{F}\times (0,T)$}, \\
v_j  =  0 , & &   \textrm{in $\p \mathcal{F}\times (0,T)$},
\end{eqnarray*}
\vspace*{-20pt}
\begin{eqnarray*}
\langle v_j, v_k \rangle_{\mathbf{L}^2(\mathcal{F})} = \delta_{jk}, &  & 
\langle v_j, v_k \rangle_{\mathbf{H}^1(\mathcal{F})} = 0 \text{ for } j \neq k,
\end{eqnarray*}
then we have
\begin{eqnarray*}
\phi_t = \sum_{j \geq 1} a_i e^{-\mu_j(T-t)} v_j, & & \psi_t = \sum_{j \geq 1} a_i e^{-\mu_j(T-t)} p_j.
\end{eqnarray*}
}
Thus the approximate controllability problem is reduced to showing that if
\begin{eqnarray*}
-\nu \Delta v + \nabla p = \mu v &  & \text{in } \mathcal{F}, \\
\div \ v = 0 &  & \text{in } \mathcal{F}, \\
v = 0 &  & \text{on } \p \mathcal{F}, \\
\sigma(v, p)n  = 0 &  & \text{on } \p \mathcal{S},
\end{eqnarray*}
with $\mu \in \R$, then $v=0$ in $\mathcal{F}$. Then we get the following unique continuation result (see \cite{Fabre})
\begin{eqnarray*}
\phi_t = 0 & \quad & \text{in } \mathcal{F}.
\end{eqnarray*}
Then we have only
\begin{eqnarray*}
- \div \ \sigma(\phi,\psi)  =  0, &  & \textrm{in $\mathcal{F} \times (0,T) $},\\
\div \  \phi  =  0, &  & \textrm{in $ \mathcal{F} \times (0,T)$}, \\
\phi  =  0 , &  & \textrm{in $ \p \mathcal{O} \times (0,T) $},\\
\phi  =  k' + r \wedge y  , & & (y,t) \in  \p \mathcal{S} \times (0,T).
\end{eqnarray*}
An energy estimate leads us to
\begin{eqnarray*}
2\nu \int_{\mathcal{F}} \left|D(\phi)\right|^2 \d y & = & \int_{\p \mathcal{S}}\phi \cdot \sigma(\phi, \psi)n \d \Gamma \\
& = & k'\cdot \int_{\p \mathcal{S}}\sigma(\phi,\psi)n\d \Gamma + r \cdot \int_{\p \mathcal{S}}y \wedge \sigma(\phi,\psi)n\d \Gamma.
\end{eqnarray*}
Combined to \eqref{condunique0}, we get
\begin{eqnarray*}
\int_{\mathcal{F}} \left|D(\phi)\right|^2 \d y & = & 0,
\end{eqnarray*}
and thus $D(\phi) = 0$. Using a result from \cite[page 18]{Temam}, this implies
\begin{eqnarray*}
\phi = k' + r\wedge y &  & \text{in } \mathcal{F}.
\end{eqnarray*}
The condition \eqref{troisadj} enables us to conclude
\begin{eqnarray*}
k' = 0, \ r = 0, \text{ and } \phi = 0 \text{ in } \mathcal{F}.
\end{eqnarray*}
Then the proof is completed.
\end{proof}

\begin{remark}
The adjoint system introduced in \eqref{premsadj}-\eqref{dersadj} can be written in terms of operators; Indeed, by denoting $\bphi = (\mathbb{P}\phi,k',r)^T$, we can formulate this system as follows
\begin{eqnarray*}
-\bphi ' & = & \mathcal{A}^{\ast} \bphi, \\
\bphi(T) & = & (\mathbb{P}\phi^T,k'^T,r^T)^T, \\
(\Id - \mathbb{P}) \phi & = & (\Id- \mathbb{P})\left(L_0(k') + \hat{L}_0(r)\right),
\end{eqnarray*}
where $\mathcal{A}$ is self-adjoint, so that we can write
\begin{eqnarray*}
\bphi(t) & = & e^{(T-t)\mathcal{A}_0}(\mathbb{P}\phi^T,k'^T,r^T)^T.
\end{eqnarray*}
The adjoint operator of $\mathcal{B}_0 = \mathcal{B}_{\lambda = 0}$ (whose expression is given in Proposition \ref{propoperator}) can be expressed as $\mathcal{B}^{\ast}_0 \bphi = -\sigma(z,\pi)n$, where $(z,\pi)$ is defined as the solution of
\begin{eqnarray*}
-\nu \Delta z + \nabla \pi = (-A_0)\phi & & \text{in } \mathcal{F}, \\
\div \ z = 0 & & \text{in } \mathcal{F}, \\
z = 0 & & \text{on } \p \mathcal{F} = \p \mathcal{O} \cup \p \mathcal{S}
\end{eqnarray*}
(see Lemma A.4 of \cite{JPR1} for more details). Then, proving the result stated above by using the classical characterization of approximate controllability instead of using directly the definition would lead to other difficulties.
\end{remark}

\section{Stabilization and feedback operator} \label{secfeedback}
\textcolor{black}{
\begin{theorem} \label{thstablinX}
For all $\lambda >0$ and $(u_0,h_1,\omega_0) \in \mathbf{H}^1_{cc}$, there exists a finite-dimensional subspace $\Xi$ of $\mathbf{H}^{3/2}(\p \mathcal{S})$ and a continuous linear feedback operator 
\begin{eqnarray*}
\mathcal{K}_{\lambda} : \L^2(\mathcal{F})\times \R^3 \times \R^3 \longrightarrow \Xi
\end{eqnarray*}
such that the solution $(u,h',\omega)$ of system \eqref{premss}--\eqref{derss} with $\zeta = \mathcal{K}_{\lambda}(\mathbb{P}u,h',\omega)$ satisfies
\begin{eqnarray*}
\|(u,h',\omega) \|_{\L^2(0,\infty;\mathbf{L}^2(\mathcal{F})\times \R^3 \times \R^3)} & \leq & C,
\end{eqnarray*}
for some positive constant $C$ depending only on $(u_0,h_1,\omega_0)$.
\end{theorem}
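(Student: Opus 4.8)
The plan is to build the feedback from the operator formulation of Proposition \ref{propoperator}, namely $U' = \mathcal{A}_\lambda U + \mathcal{B}_\lambda \zeta$ with $U = (\mathbb{P}u, h', \omega)^T$, reducing the stabilization to a finite-dimensional pole-assignment problem for the unstable modes of $\mathcal{A}_\lambda$. First I would describe the spectrum of $\mathcal{A}_\lambda = \mathcal{A} + \lambda \mathbb{M}^{-1}\mathbb{M}_0$. By Theorem \ref{thanalsg} the operator $\mathcal{A}$ generates an analytic semigroup with compact resolvent, and by Proposition \ref{propsa} the operator $\mathbb{A} = \mathbb{M}\mathcal{A}$ is self-adjoint; hence $\mathcal{A}$, and likewise $\mathcal{A}_\lambda$ (since $\mathbb{M}^{-1}\mathbb{M}_0$ is self-adjoint for the same weighted inner product), are self-adjoint for the inner product induced by $\mathbb{M}$. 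Its spectrum is therefore real and discrete, accumulating only at $-\infty$, so $\mathcal{A}_\lambda$ has only finitely many eigenvalues $\Lambda_1, \dots, \Lambda_N$ with $\Lambda_j \geq 0$, separated by a spectral gap from the remaining strictly negative ones. Let $Z_u$ be the finite-dimensional eigenspace attached to $\{\Lambda_1, \dots, \Lambda_N\}$, let $\Pi_u$ be the associated spectral projection, and set $Z_s = \mathrm{Range}(\mathrm{Id} - \Pi_u)$; the restriction $\mathcal{A}_\lambda^s$ of $\mathcal{A}_\lambda$ to $Z_s$ generates an exponentially stable semigroup, because for analytic semigroups the growth bound coincides with the spectral bound.

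Second I would project the system. Setting $U_u = \Pi_u U$ and $U_s = (\mathrm{Id} - \Pi_u)U$, the dynamics split into a finite-dimensional block $U_u' = \mathcal{A}_\lambda^u U_u + \Pi_u \mathcal{B}_\lambda \zeta$ and an exponentially stable block $U_s' = \mathcal{A}_\lambda^s U_s + (\mathrm{Id} - \Pi_u)\mathcal{B}_\lambda \zeta$. The decisive step is that the finite-dimensional pair $(\mathcal{A}_\lambda^u, \Pi_u \mathcal{B}_\lambda)$ is controllable. I would obtain this from the approximate controllability of Theorem \ref{thapproxcont} through the Fattorini--Hautus criterion: approximate controllability is equivalent to the assertion that no eigenvector of the adjoint $\mathcal{A}_\lambda^*$ lies in the kernel of $\mathcal{B}_\lambda^*$, which is exactly the unique continuation property established in that proof. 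Restricting this to the finitely many unstable eigenvalues yields the Hautus rank condition for the finite-dimensional pair and hence its controllability.

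Third, by the pole-assignment theorem for controllable finite-dimensional pairs I would pick a gain $K$ making $\mathcal{A}_\lambda^u + \Pi_u \mathcal{B}_\lambda K$ Hurwitz, and define $\mathcal{K}_\lambda(\mathbb{P}u, h', \omega) = K \Pi_u (\mathbb{P}u, h', \omega)^T$; since $K$ has finite-dimensional range, $\Xi := \mathrm{Range}(K)$ is a finite-dimensional subspace of $\mathbf{H}^{3/2}(\p \mathcal{S})$ whose elements satisfy $\int_{\p \mathcal{S}} \zeta \cdot n \, \d\Gamma = 0$ by construction. In closed loop, $U_u' = (\mathcal{A}_\lambda^u + \Pi_u \mathcal{B}_\lambda K)U_u$ decays exponentially, so $U_u \in \L^2(0,\infty)$, while the stable block is an exponentially stable system forced by $(\mathrm{Id} - \Pi_u)\mathcal{B}_\lambda K U_u \in \L^2(0,\infty)$, whence $U_s \in \L^2(0,\infty)$ as well; recovering $(\mathrm{Id} - \mathbb{P})u$ through the algebraic relation \eqref{control2} then yields the stated bound.

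The step I expect to be the main obstacle is the rigorous treatment of the unbounded control operator $\mathcal{B}_\lambda$, whose nontrivial component is $(\lambda\,\mathrm{Id} - A_0)L_0$, throughout the argument: one must check that $\mathcal{B}_\lambda^*$ is well-defined on the smooth unstable eigenfunctions so that the Fattorini--Hautus test is legitimate in this unbounded-input setting, and that $(\mathrm{Id} - \Pi_u)\mathcal{B}_\lambda K U_u$ is an admissible input ensuring the stable semigroup maps it into $\L^2(0,\infty)$. The analyticity of the semigroup, which smooths on $Z_s$, together with the finite dimensionality of $Z_u$ on which everything reduces to smooth eigenfunctions, is what makes these estimates go through.
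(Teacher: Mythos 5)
Your proposal follows essentially the same route as the paper: split the state space along the spectrum of the generator into a finite-dimensional unstable part and an exponentially stable part, deduce exact controllability of the projected finite-dimensional pair from the approximate controllability of Theorem \ref{thapproxcont}, and stabilize that pair by a finite-dimensional gain whose range defines $\Xi$. The only minor differences are that the paper works with $\mathcal{A}$ (i.e.\ the system at $\lambda=0$) and passes to the $\lambda$-shifted system by multiplying by $e^{\lambda t}$ at the very end rather than treating $\mathcal{A}_{\lambda}$ directly, and it gets controllability of the projected pair by the elementary remark that the projection of the dense reachable set onto the finite-dimensional unstable space is the whole space, rather than through the Fattorini--Hautus test you invoke (which sidesteps the admissibility issue for the unbounded $\mathcal{B}_{\lambda}$ that you rightly flag as the delicate point).
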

}

\begin{proof}
\textcolor{black}{Let us consider the formulation \eqref{control1}-\eqref{control2} of system \eqref{premss}--\eqref{derss} given by Proposition \ref{propoperator}, and let us focus on equation \eqref{control1}. Without loss of generality, we can choose $\lambda$ in the resolvent of $\mathcal{A}$. From Theorem \ref{thanalsg}, we know that the spectrum of $\mathcal{A}$ is reduced to a discrete set of distinct eigenvalues, that we can put in order as follows
\begin{eqnarray*}
\Re \lambda_1 \geq \Re \lambda_2 \geq \dots \geq \Re \lambda_N > - \lambda > \Re \lambda_{N+1} \geq \dots,
\end{eqnarray*}
and which are contained in an angular domain $\left\{z \in \mathbb{C}\setminus\{0\} \mid \mathrm{arg}(\alpha-z) \in (-\alpha, \alpha)\right\}$, for some $\alpha \in \R$ and $\alpha \in (0, \pi /2)$. Besides, the generalized eigenspace of each eigenvalue is of finite dimension (see \cite{Kato}, Chapter III, Theorem 6.29, page 187). Let us denote by $\Lambda(\lambda_i)$ the real generalized eigenspace associated with $\lambda_i$ if $\lambda_i \in \R$ and with the pair $(\lambda_i,\overline{\lambda_i})$ if $\Im \lambda_i \neq 0$, and let us set
\begin{eqnarray*}
\mathbf{H}_u = \bigoplus_{i=1}^N \Lambda(\lambda_i), & \quad & \mathbf{H}_s = \bigoplus_{i=N+1}^{\infty} \Lambda(\lambda_i).
\end{eqnarray*}
If $(e_j(\lambda_i))_{1 \leq j \leq m(\lambda_i)}$ is a basis of the complex generalized eigenspace associated with $\lambda_i$ (where $m(\lambda_i)$ denoting the geometric multiplicity of $\lambda_i)$, then $\Lambda(\lambda_i)$ is actually the space generated by the family $\left\{ \Re (e_j(\lambda_i)), \Im (e_j(\lambda_i)) \mid 1 \leq j \leq m(\lambda_j) \right\}$. Note that $\mathbf{H}_u$ is the unstable space of system \eqref{lin1}--\eqref{lin10}, and $\mathbf{H}_s$ is the stable one. They are both invariant by $\mathcal{A}$. Let us denote by $P_{u}$ the projection onto the finite-dimensional unstable subspace $\mathbf{H}_u$ (parallel to the stable subspace $\mathbf{H}_s$). If we project system \eqref{control1} (with $\lambda = 0$) on $\mathbf{H}_u$, we obtain
\begin{eqnarray}
\frac{\d }{\d t} P_{u}
\left( \begin{array} {lll}
\mathbb{P}u \\ h' \\ \omega
\end{array} \right)  =  \mathcal{A} P_{u}
\left( \begin{array} {lll}
\mathbb{P}u \\ h' \\ \omega
\end{array} \right) + P_{u} \mathcal{B}_{0}\zeta_0, & \quad & P_{u}
\left( \begin{array} {lll}
\mathbb{P}u(\cdot,0) \\ h'(0) \\ \omega(0)
\end{array} \right) = P_{u}
\left( \begin{array} {lll}
u_0 \\ h_1 \\ \omega_0
\end{array} \right). \nonumber \\
\label{proj1}
\end{eqnarray}
From Theorem \ref{thapproxcont}, system \eqref{lin1}--\eqref{lin10} is approximately controllable in any time $T > 0$, which means that the reachable set $R(T)$ at time $T$ is dense in $\mathbf{L}^2(\mathcal{F})\times \R^3 \times \R^3$. Hence the projected system \eqref{proj1} is also approximately controllable. The projection of the reachable set $R(T)$, through $P_{u}$, on $\mathbf{H}_u$, which is finite-dimensional, is then actually the whole space $\mathbf{H}_u$. This means that system \eqref{proj1} is exactly controllable.}\\
\textcolor{black}{Then, from \cite{Zabczyk} (see Theorem 2.9 of Chapter I, page 44), we know that there exists a linear operator $K_u$ defined on $\mathbf{H}_u$ such that the operator $\mathcal{A}P_{u}+ P_{u}\mathcal{B}_{0}K_u$ is exponentially stable with $\lambda$ as decay rate. Since the operator $\mathcal{A}(\Id-P_{u})$ has the same property, it is sufficient to set
\begin{eqnarray*}
\mathcal{K}_{\lambda} =  K_u P_{u} ,
\text{ and to choose $\Xi$ such that: } \mathrm{Im}\left(P_{u}\mathcal{B}_0 \right) = P_{u}\mathcal{B}_0(\Xi).
\end{eqnarray*}
Thus there exists a constant $C>0$ depending only on $(u_0, h_1, \omega_0)$ such that the solution of system \eqref{control1} with $\lambda = 0$ and $\zeta_0=\mathcal{K}_{\lambda}(\mathbb{P}u,h',\omega)^T$ satisfies
\begin{eqnarray*}
\|(\mathbb{P}u(\cdot,t),h'(t),\omega(t)) \|_{\mathbf{L}^2(\mathcal{F})\times \R^3 \times \R^3} & \leq & C\exp(-\lambda t).
\end{eqnarray*}
On the other hand, with regards to the equation \eqref{control2}, the same kind of inequality holds for $((\Id-\mathbb{P})u,h',\omega)$. Then we notice that $(u,h',\omega)^T$ is the solution of system \eqref{control1} corresponding to $\lambda = 0$ and $\zeta_0=\mathcal{K}_{\lambda}(\mathbb{P}u,h',\omega)^T$ if and only if $(\hat{u}, \hat{h}', \hat{\omega}) = e^{\lambda t} (u,h',\omega)$ is the solution of system \eqref{control1} with $\zeta = e^{\lambda t}\zeta_0$. Hence it is sufficient to consider the operator $e^{\lambda t} \mathcal{K}_{\lambda}$ instead of $\mathcal{K}_{\lambda}$ to complete the proof.}
\end{proof}

\begin{remark}
\textcolor{black}{Let us add some comments on a practical means of constructing a feedback operator as given in the proof above. Let us consider $\mathcal{P}^{\infty}_{(u_0,h_1,\omega_0)}$ the following infinite time horizon control problem
\begin{eqnarray*}
\inf \left\{\begin{small} \mathcal{J}(\mathbb{P}u, h', \omega, \zeta) \mid (\mathbb{P}u, h', \omega, \zeta) \text{ obeys \eqref{control1} with } \zeta \in \L^2(0,\infty;\Xi), \ \int_{\p \mathcal{S}} \zeta\cdot n \d \Gamma = 0\end{small}
\right\}
\end{eqnarray*}
where
\begin{eqnarray*}
 \mathcal{J}(\mathbb{P}u, h', \omega, \zeta) = \frac{1}{2} \int_0^{\infty} \| P_{u}(\mathbb{P}u(\cdot,t), h'(t), \omega(t), \zeta(\cdot,t)) \|^2_{\mathbf{H}_u}\d t + \frac{1}{2} \int_0^{\infty} \| \zeta(\cdot,t) \|^2_{\Xi}\d t.
\end{eqnarray*}
This approach is quite classical for this kind of feedback stabilization problem. Like it is done in \cite{Sontag} (see Lemma 8.4.1 page 381 and Theorem 41 page 384) for instance, the problem $\mathcal{P}^{\infty}_{(u_0,h_1,\omega_0)}$ admits a unique solution $(\mathbb{P}u,h',\omega)$, since we have seen in the proof above that the projected system is controllable (for $\lambda =0$, but also for all $\lambda >0$). There exists an operator $\Pi_u \in \mathcal{L}(\mathbf{H}_u, (\mathbf{H}_u)^{\ast})$ satisfying $\Pi_u = \Pi_u^{\ast} \geq 0$ (in the sense of quadratic forms) such that the optimal cost is written as
\begin{eqnarray*}
\frac{1}{2}\left( P_{u}(\mathbb{P}u_0, h_1, \omega_0), \Pi_u P_{u}(\mathbb{P}u_0, h_1, \omega_0) \right)_{\mathbf{H}_u, (\mathbf{H}_u)^{\ast}}.
\end{eqnarray*}
Moreover, $\Pi_u$ is the solution of the following finite dimensional algebraic Riccati equation
\begin{eqnarray*}
\Pi_u \in \mathcal{L}(\mathbf{H}_u, (\mathbf{H}_u)^{\ast}), \ \Pi_u = \Pi_u^{\ast} \geq 0, \quad
\Pi_u \mathcal{A}_{\lambda,u} + \mathcal{A}_{\lambda,u}^{\ast} \Pi_u^{\ast} - \Pi_u \mathcal{B}_{\lambda,u}^{\ast} \mathcal{A}_{\lambda,u}^{\ast} \Pi_u^{\ast} + I_u = 0,
\end{eqnarray*}
where
\begin{eqnarray*}
\mathcal{A}_{\lambda,u} & = & P_{u}\mathcal{A}_{\lambda}P_{u} \in \mathcal{L}(\mathbf{H}_u, \mathbf{H}_u), \\
\mathcal{B}_{\lambda,u} & = & P_{u}\mathcal{B}_{\lambda} \in \mathcal{L}(\Xi, \mathbf{H}_u),
\end{eqnarray*}
and the expression of the optimal closed-loop control is
\begin{eqnarray*}
\zeta  =  \mathcal{K}_{\lambda} (\mathbb{P}u,h',\omega) = -\mathcal{B}^{\ast}_{\lambda,u}\Pi_u P_{u}(\mathbb{P}u,h',\omega).
\end{eqnarray*}
}
\end{remark}

\begin{remark}
\textcolor{black}{Let us now take a look at the regularity of the boundary feedback control obtained above. The maximal regularity in space considered for the control $\zeta$ until now has been $\mathbf{H}^{3/2}(\p \mathcal{S})$. In the perspective of the nonlinear system studied in Part II, we will actually need to consider more regularity, namely
\begin{eqnarray*}
\zeta & \in & \L^2(0,\infty;\mathbf{H}^{5/2}(\p \mathcal{S}))\cap \H^1(0,\infty;\mathbf{H}^{1/2}(\p \mathcal{S})).
\end{eqnarray*}
Note first that the matrix $\Pi_u$ given in the remark above as the solution of an algebraic Riccati equation does not depend on time. This is then also the case of $\mathcal{K}_{\lambda} \in \mathcal{L}(\mathbf{H}_u, \Xi)$. Thus, for
\begin{eqnarray*}
(u,h',\omega) & \in & \H^{2,1}(Q_{\infty}^0)\times \H^1(0,\infty;\R^3) \times \H^1(0,\infty;\R^3),
\end{eqnarray*}
we have in particular
\begin{eqnarray*}
\mathcal{K}_{\lambda}(\mathbb{P}u,h',\omega) & \in & \H^1(0,\infty; \Xi),\\
\| \mathcal{K}_{\lambda}(\mathbb{P}u,h',\omega) \|_{\H^1(0,\infty; \Xi)} & \leq & 
\| (\mathbb{P}u,h',\omega) \|_{\H^1(0,\infty;\mathbf{H}_u)}.
\end{eqnarray*}
Let us remind that the eigenspace $\mathbf{H}_u$ is a finite dimensional subspace of $\L^2(\mathcal{F})\times \R^3 \times \R^3$, and that $\Xi$ is a finite dimensional subspace of $\mathbf{H}^{3/2}(\p \mathcal{S})$. The basis functions of $\mathbf{H}_u$ can be thus chosen regular enough to enable us to choose $\Xi$ as being a subspace of $\mathbf{H}^{5/2}(\p \mathcal{S})$, and so well that we can deduce the estimate
\begin{eqnarray*}
\| \mathcal{K}_{\lambda}(\mathbb{P}u,h',\omega) \|_{\H^1(0,\infty; \mathbf{H}^{5/2}(\p \mathcal{S}))} & \leq & 
\| (\mathbb{P}u,h',\omega) \|_{\H^{2,1}(Q_{\infty}^0)\times \H^1(0,\infty;\R^3) \times \H^1(0,\infty;\R^3)}.
\end{eqnarray*}
In fact, in Part II, we will only need the regularity
\begin{eqnarray*}
\zeta = \mathcal{K}_{\lambda}(\mathbb{P}u,h',\omega) & \in & \L^2(0,\infty;\mathbf{H}^{5/2}(\p \mathcal{S})) \cap \H^1(0,\infty;\mathbf{H}^{1/2}(\p \mathcal{S})).
\end{eqnarray*}
}
\end{remark}

\section{Definition of an admissible deformation from a boundary velocity} \label{secAhAh}
Given a control $\zeta \in \L^2(0,\infty;\mathbf{H}^{5/2}(\p \mathcal{S})) \cap \H^1(0,\infty;\mathbf{H}^{1/2}(\p \mathcal{S}))$ with the compatibility condition
\begin{eqnarray*}
\int_{\p \mathcal{S}} \zeta \cdot n \d \Gamma & = & 0,
\end{eqnarray*}
(which can be chosen in a feedback form, as explained in the previous section), we want to define an internal deformation of the solid, denoted by $X^{\ast}_{\zeta}$, which is {\it admissible} in the sense of Definition \ref{deflincontrol}. In particular we look for $X^{\ast}_{\zeta}$ which satisfies the linearized constraints given by \eqref{const105}--\eqref{const102}, and such that
\begin{eqnarray*}
 e^{\lambda t} \frac{\p X^{\ast}_{\zeta}}{\p t}(y,t) & = & \zeta(y,t), \quad (y,t) \in \p \mathcal{S} \times (0,\infty).
\end{eqnarray*}
We also want the norm of $e^{\lambda t} \frac{\p X^{\ast}_{\zeta}}{\p t}$ in the space
\begin{eqnarray*}
\L^2(0,\infty;\mathbf{H}^3(\mathcal{S}) \cap \H^1(0,\infty;\mathbf{H}^1(\mathcal{S}))
\end{eqnarray*}
to be controlled by the norm of $\zeta$ in $\L^2(0,\infty;\mathbf{H}^{5/2}(\p \mathcal{S})) \cap \H^1(0,\infty;\mathbf{H}^{1/2}(\p \mathcal{S}))$.

\subsection{Definition of a deformation satisfying the linearized constraints} \label{secchoice1} \label{secLame}
We search for the deformation $X^{\ast}_{\zeta}$ by writing it as
\begin{eqnarray}
X^{\ast}_{\zeta}(y,t) & = & y + \int_0^t e^{-\lambda s}\varphi(y,s) \d s, \label{eqintegrale}
\end{eqnarray}
where the velocity $\varphi(\cdot,t)$ is the solution of the following elliptic system
\begin{eqnarray}
\mu \varphi - 2\div \ D(\varphi) = F(\varphi) & & \text{in } \mathcal{S}, \label{Lame01} \\
\varphi =  \zeta & & \text{on } \p \mathcal{S}, \label{Lame04}
\end{eqnarray}
with
\begin{eqnarray*}
& & \mu \in \R, \quad D(\varphi) = \frac{1}{2}\left(\nabla \varphi + \nabla \varphi ^T \right), \\
& & F(\varphi)(y,t) = \frac{\rho_{\mathcal{S}}}{M}\left(\int_{\p \mathcal{S}} 2D(\varphi) n \d \Gamma \right) +
\rho_{\mathcal{S}}I_0^{-1}\left(\int_{\p \mathcal{S}} y \wedge 2D(\varphi) n \d \Gamma \right) \wedge y.
\end{eqnarray*}

\begin{proposition} \label{lemmaxistence}
For $\zeta \in \L^2(0,\infty;\mathbf{H}^{5/2}(\mathcal{S})) \cap \H^1(0,\infty;\mathbf{H}^{1/2}(\mathcal{S}))$ satisfying
\begin{eqnarray*}
\int_{\p \mathcal{S}} \zeta \cdot n \d \Gamma & = & 0,
\end{eqnarray*}
system \eqref{Lame01}--\eqref{Lame04} admits a unique solution $\varphi$ in $\L^2(0,\infty;\mathbf{H}^3(\mathcal{S}) \cap \H^1(0,\infty;\mathbf{H}^1(\mathcal{S}))$, for $\mu>0$ large enough. Moreover, there exists a positive constant $C > 0$ such that
\begin{eqnarray}
\| \varphi \|_{\L^2(0,\infty;\mathbf{H}^3(\mathcal{S}) \cap \H^1(0,\infty;\mathbf{H}^1(\mathcal{S}))}
& \leq & C \| \zeta \|_{\L^2(0,\infty;\mathbf{H}^{5/2}(\mathcal{S})) \cap \H^1(0,\infty;\mathbf{H}^{1/2}(\mathcal{S}))}.
\label{estinftylame}
\end{eqnarray}
Besides, if $\zeta_1, \zeta_2 \in \L^2(0,\infty;\mathbf{H}^{5/2}(\mathcal{S})) \cap \H^1(0,\infty;\mathbf{H}^{1/2}(\mathcal{S}))$, and if $\varphi_1$ and $\varphi_2$ denote the solutions associated with $\zeta_1$ and $\zeta_2$ respectively, then
\begin{eqnarray}
\| \varphi_2 - \varphi_1 \|_{\L^2(0,\infty;\mathbf{H}^3(\mathcal{S}) \cap \H^1(0,\infty;\mathbf{H}^1(\mathcal{S}))} & \leq &
C \| \zeta_2 - \zeta_1 \|_{\L^2(0,\infty;\mathbf{H}^{5/2}(\mathcal{S})) \cap \H^1(0,\infty;\mathbf{H}^{1/2}(\mathcal{S}))}. \nonumber \\
\label{estinftylame12}
\end{eqnarray}
\end{proposition}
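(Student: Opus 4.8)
The plan is to treat \eqref{Lame01}--\eqref{Lame04} as a linear Lam\'e-type boundary value problem in which the right-hand side couples $\varphi$ to itself only through the two vectors
\[
a(\varphi) = \int_{\p \mathcal{S}} 2D(\varphi)n\,\d \Gamma \in \R^3, \qquad b(\varphi) = \int_{\p \mathcal{S}} y\wedge 2D(\varphi)n\,\d \Gamma \in \R^3,
\]
so that $F$ is \emph{linear} and its range lies in the fixed six-dimensional space $\mathcal{V} = \{\,c + d\wedge y \mid c,d\in\R^3\,\}$. For each fixed $t$ the system is elliptic (no time derivative appears), so I would first solve the decoupled problem and then absorb the finite-rank coupling by a perturbation argument valid for $\mu$ large. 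First I would establish well-posedness of the auxiliary problem $\mu\varphi - 2\div D(\varphi) = g$ in $\mathcal{S}$, $\varphi = \zeta$ on $\p\mathcal{S}$, for given $g$: after lifting $\zeta$ by an extension with $\|R\zeta\|_{\mathbf{H}^3(\mathcal{S})}\lesssim\|\zeta\|_{\mathbf{H}^{5/2}(\p\mathcal{S})}$, the homogeneous-boundary problem is handled by Lax--Milgram, the form $\mu\int_{\mathcal{S}}\varphi\cdot\eta + 2\int_{\mathcal{S}} D(\varphi):D(\eta)$ being coercive on $\mathbf{H}^1_0(\mathcal{S})$ by Korn's inequality, and Agmon--Douglis--Nirenberg regularity upgrading the weak solution to $\mathbf{H}^3(\mathcal{S})$ when $g\in\mathbf{H}^1(\mathcal{S})$. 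I would record the $\mu$-dependence of the zero-boundary resolvent $T_\mu^0 := (\mu - 2\div D)^{-1}$: coercivity gives $\|T_\mu^0 g\|_{\mathbf{L}^2}\lesssim\mu^{-1}\|g\|_{\mathbf{L}^2}$, while the elliptic estimate $\|T_\mu^0 g\|_{\mathbf{H}^2}\lesssim\|g\|_{\mathbf{L}^2}$ holds uniformly for $\mu\ge 1$.

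The heart of the argument is the finite-dimensional reduction. Writing $\varphi = \varphi_\zeta + \chi$ with $\varphi_\zeta$ the $\zeta$-part ($(\mu-2\div D)\varphi_\zeta=0$, $\varphi_\zeta|_{\p\mathcal{S}}=\zeta$) and $\chi = T_\mu^0 F(\varphi)$, linearity of $F$ reduces the system to the fixed-point equation $g^\ast = F(\varphi_\zeta) + \mathcal{T}_\mu g^\ast$ for the source $g^\ast = F(\varphi)\in\mathcal{V}$, where $\mathcal{T}_\mu := F\circ T_\mu^0 : \mathcal{V}\to\mathcal{V}$. I would then show $\|\mathcal{T}_\mu\|\to 0$ as $\mu\to\infty$: for $g\in\mathcal{V}$ and $\Phi = T_\mu^0 g$,
\[
|a(\Phi)| + |b(\Phi)| \lesssim \|D(\Phi)\|_{\mathbf{L}^2(\p\mathcal{S})} \lesssim \|\Phi\|_{\mathbf{H}^{3/2+\ep}(\mathcal{S})} \lesssim \|\Phi\|_{\mathbf{L}^2(\mathcal{S})}^{1-\beta}\|\Phi\|_{\mathbf{H}^2(\mathcal{S})}^{\beta} \lesssim \mu^{-(1-\beta)}\|g\|,
\]
with $\beta = \tfrac34+\tfrac{\ep}{2}<1$, by a trace estimate followed by interpolation between the two bounds above. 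Hence $\Id - \mathcal{T}_\mu$ is invertible on $\mathcal{V}$ for $\mu$ large, which yields a unique $g^\ast = (\Id-\mathcal{T}_\mu)^{-1}F(\varphi_\zeta)$ and therefore a unique solution $\varphi = \varphi_\zeta + T_\mu^0 g^\ast$; uniqueness follows by applying the same reasoning to $\zeta = 0$. This is the step I expect to be the main obstacle, since it is precisely where the smallness furnished by ``$\mu$ large'' must beat the order-one coupling through the boundary stresses; everything else is soft.

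For the estimate \eqref{estinftylame} I would argue at fixed $t$ first. Since $\|\varphi_\zeta\|_{\mathbf{H}^3}\lesssim\|\zeta\|_{\mathbf{H}^{5/2}(\p\mathcal{S})}$, $\|F(\varphi_\zeta)\|_{\mathcal{V}}\lesssim\|\varphi_\zeta\|_{\mathbf{H}^2}\lesssim\|\zeta\|_{\mathbf{H}^{5/2}(\p\mathcal{S})}$, and $(\Id-\mathcal{T}_\mu)^{-1}$ is bounded uniformly, one gets $\|g^\ast\|_{\mathcal{V}}\lesssim\|\zeta\|_{\mathbf{H}^{5/2}(\p\mathcal{S})}$; as $g^\ast\in\mathcal{V}$ is an affine function of $y$, its $\mathbf{H}^1(\mathcal{S})$ norm is controlled by $\|g^\ast\|_{\mathcal{V}}$, so $\|\varphi(t)\|_{\mathbf{H}^3(\mathcal{S})}\lesssim\|\zeta(t)\|_{\mathbf{H}^{5/2}(\p\mathcal{S})}$. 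Squaring and integrating in $t$ gives the $\L^2(0,\infty;\mathbf{H}^3(\mathcal{S}))$ bound. For the time derivative I would use that the solution operator $\zeta\mapsto\varphi$ is linear and autonomous, so $\p_t\varphi$ solves the same system with datum $\p_t\zeta$; the only subtlety is that $F$ is not defined at the $\mathbf{H}^1$ level. To bypass this I would record the identities $a(\varphi)=\tfrac{\mu}{2}\int_{\mathcal{S}}\varphi\,\d y$ and $b(\varphi)=\tfrac{\mu}{2}\int_{\mathcal{S}} y\wedge\varphi\,\d y$, valid for \emph{solutions}: they follow by integrating the equation (resp.\ its first moment $y\wedge\,\cdot\,$) over $\mathcal{S}$, using $\int_{\mathcal{S}}y\,\d y=0$, the homogeneity relation $M=\rho_{\mathcal{S}}|\mathcal{S}|$, and $I_0=\rho_{\mathcal{S}}\int_{\mathcal{S}}(|y|^2\I_{\R^3}-y\otimes y)\,\d y$. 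These recast $F(\varphi)$ as a volume-based operator bounded on $\mathbf{L}^2(\mathcal{S})$, so the $\mathbf{H}^1(\mathcal{S})$ theory applies to $\p_t\varphi$ and gives $\|\p_t\varphi(t)\|_{\mathbf{H}^1}\lesssim\|\p_t\zeta(t)\|_{\mathbf{H}^{1/2}(\p\mathcal{S})}$; integrating in $t$ completes \eqref{estinftylame}. Finally, \eqref{estinftylame12} is immediate: by linearity $\varphi_2-\varphi_1$ solves \eqref{Lame01}--\eqref{Lame04} with boundary datum $\zeta_2-\zeta_1$, so \eqref{estinftylame} applied to the difference is exactly the asserted Lipschitz bound.
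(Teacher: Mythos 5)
Your argument is correct in substance but organized quite differently from the paper's. The paper also lifts the boundary datum and reduces to a homogeneous Dirichlet problem, but it then runs a Banach fixed-point argument for the map $\psi \mapsto \phi$ solving $\mu\phi - 2\,\div D(\phi) = F(\psi) + \dots$ on a ball $\mathbf{B}_R$ of $\mathbf{H}^2(\mathcal{S})$, proving stability of the ball and contraction via exactly the chain of estimates you use for $\|\mathcal{T}_\mu\|$ (trace of $D(\psi)n$, interpolation between $\mathbf{H}^1$ and $\mathbf{H}^2$ with exponent $\alpha = 1/2+\varepsilon$, Young's inequality, then $\mu$ large). Your finite-dimensional reduction — observing that $F$ has range in the six-dimensional space $\mathcal{V}$ and solving $(\Id - \mathcal{T}_\mu)g^\ast = F(\varphi_\zeta)$ on $\mathcal{V}$ — is a genuinely different and arguably cleaner packaging of the same smallness mechanism: it makes the linearity of the solution map manifest, so that \eqref{estinftylame12} really is ``immediate by linearity,'' whereas in the paper's contraction framework the Lipschitz estimate has to be re-derived from the same inequalities. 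Both approaches hinge on the identical quantitative input ($\mu^{-1}$ decay of the $\mathbf{L}^2$ resolvent bound against a $\mu$-uniform $\mathbf{H}^2$ bound, beating the order-one boundary-stress coupling), so neither buys extra generality.

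One step you should re-examine is the time-derivative estimate. Your identities $a(\varphi)=\tfrac{\mu}{2}\int_{\mathcal{S}}\varphi$ and $b(\varphi)=\tfrac{\mu}{2}\int_{\mathcal{S}}y\wedge\varphi$ are obtained by integrating the equation with the divergence theorem taken with $n$ \emph{outward} for $\mathcal{S}$. The paper's stated convention is that $n$ is exterior to $\mathcal{F}$, hence \emph{interior} to $\mathcal{S}$; with that orientation the same computation yields instead $\int_{\mathcal{S}}\varphi = 0$ and $\int_{\mathcal{S}}y\wedge\varphi = 0$ (which is precisely what Corollary \ref{lemmadmissible} needs), and $a(\varphi)$, $b(\varphi)$ are then \emph{not} recoverable from volume integrals of $\varphi$, so your recast of $F$ as an $\mathbf{L}^2$-bounded volume operator breaks down. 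The repair is easy and is what the paper does: since $F(\p_t\varphi)$ takes values in $\mathcal{V}$ and $\p_t\varphi$ is $\mathbf{L}^2$-orthogonal to $\mathcal{V}$ (by the two vanishing moments), the force term drops out of the energy identity, $\int_{\mathcal{S}}F(\p_t\varphi)\cdot\p_t\varphi = 0$, and the $\mathbf{H}^1$ bound by $\|\p_t\zeta\|_{\mathbf{H}^{1/2}(\p\mathcal{S})}$ follows; to make $F(\p_t\varphi)$ rigorous at this low regularity one can work with difference quotients, which inherit the $\mathbf{H}^3$ theory. With that adjustment your proof is complete.
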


\begin{proof}
The proof of this proposition is given \textcolor{black}{below} in section \ref{lemma52}.
\end{proof}

\begin{remark}
The compatibility condition assumed for the datum $\zeta$ is useless for the proof of Proposition \ref{lemmaxistence}, but contributes to making the mapping $X^{\ast}_{\zeta}$ so obtained an {\it admissible} control (in the sense of Definition \ref{deflincontrol}).
\end{remark}

Let us see that the mapping $X^{\ast}_{\zeta}$ thus chosen is admissible.

\begin{corollary} \label{lemmadmissible}
For $\zeta \in \L^2(0,\infty;\mathbf{H}^{5/2}(\mathcal{S})) \cap \H^1(0,\infty;\mathbf{H}^{1/2}(\mathcal{S}))$, the deformation $X^{\ast}_{\zeta}$ provided by Proposition \ref{lemmaxistence} and equation \eqref{eqintegrale} is admissible for the linear system \eqref{lin1}--\eqref{lin10} in the sense of Definition \ref{deflincontrol}.
\end{corollary}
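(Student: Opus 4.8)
The plan is to check the three requirements of Definition \ref{deflincontrol} one by one, after differentiating the representation \eqref{eqintegrale}, which gives $\frac{\p X^{\ast}_{\zeta}}{\p t}(y,t) = e^{-\lambda t}\varphi(y,t)$ and hence $e^{\lambda t}\frac{\p X^{\ast}_{\zeta}}{\p t} = \varphi$. The required regularity $e^{\lambda t}\frac{\p X^{\ast}_{\zeta}}{\p t} \in \L^2(0,\infty;\mathbf{H}^3(\mathcal{S}))\cap \H^1(0,\infty;\mathbf{H}^1(\mathcal{S}))$ is then exactly the conclusion of Proposition \ref{lemmaxistence}, and from \eqref{eqintegrale} one has $X^{\ast}_{\zeta}(\cdot,0)=\Id_{\mathcal{S}}$ with continuity in time. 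The boundary constraint \eqref{const105} is immediate: since $\varphi=\zeta$ on $\p\mathcal{S}$ by \eqref{Lame04},
\begin{eqnarray*}
\int_{\p \mathcal{S}} \frac{\p X^{\ast}_{\zeta}}{\p t}\cdot n \d \Gamma & = & e^{-\lambda t}\int_{\p \mathcal{S}} \zeta \cdot n \d \Gamma \; = \; 0,
\end{eqnarray*}
using the compatibility condition assumed on $\zeta$.

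Next I would reduce the two remaining constraints to zero-mean identities for $\varphi$. Using \eqref{eqintegrale} and the normalization $\int_{\mathcal{S}} y \d y = 0$,
\begin{eqnarray*}
\int_{\mathcal{S}} X^{\ast}_{\zeta}(y,t)\d y & = & \int_0^t e^{-\lambda s}\left(\int_{\mathcal{S}}\varphi(y,s)\d y\right)\d s, \\
\int_{\mathcal{S}} y\wedge \frac{\p X^{\ast}_{\zeta}}{\p t}\d y & = & e^{-\lambda t}\int_{\mathcal{S}} y\wedge \varphi(y,t)\d y,
\end{eqnarray*}
so that \eqref{const101} and \eqref{const102} hold for every $t$ as soon as
\begin{eqnarray*}
\int_{\mathcal{S}} \varphi(y,t)\d y = 0, & \quad & \int_{\mathcal{S}} y\wedge \varphi(y,t)\d y = 0, \quad t\geq 0.
\end{eqnarray*}

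These two identities are where the particular shape of the forcing $F$ is used, and I would obtain them by integrating the elliptic equation \eqref{Lame01} against the rigid fields $1$ and $y\wedge(\,\cdot\,)$. Integrating \eqref{Lame01} over $\mathcal{S}$ and applying the divergence theorem gives $\int_{\mathcal{S}} 2\div D(\varphi)\d y = \int_{\p\mathcal{S}} 2D(\varphi)n\d\Gamma$, while the volume integral of $F$ is computed from $\int_{\mathcal{S}} y\d y = 0$ together with the homogeneous-mass relation $M=\rho_{\mathcal{S}}\,|\mathcal{S}|$ (mass equals density times volume); the coefficient $\frac{\rho_{\mathcal{S}}}{M}$ in front of $\int_{\p\mathcal{S}}2D(\varphi)n\d\Gamma$ is tuned exactly so that this boundary force cancels, leaving $\mu\int_{\mathcal{S}}\varphi\d y=0$ and hence, since $\mu>0$, the first identity. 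For the angular identity I would use that $2D(\varphi)$ is symmetric: integrating $y\wedge(\,\cdot\,)$ applied to \eqref{Lame01} and integrating by parts, the interior torque contribution, being the contraction of the antisymmetric Levi-Civita symbol with the symmetric tensor $2D(\varphi)$, vanishes, so that $\int_{\mathcal{S}} y\wedge 2\div D(\varphi)\d y = \int_{\p\mathcal{S}} y\wedge 2D(\varphi)n\d\Gamma$. The torque produced by the second piece of $F$ is evaluated through the identity $y\wedge(b\wedge y)=|y|^2 b-(y\cdot b)y$ and the inertia-tensor relation $\int_{\mathcal{S}}(|y|^2\I_{\R^3}-y\otimes y)\d y = \rho_{\mathcal{S}}^{-1}I_0$, which makes the coefficient $\rho_{\mathcal{S}}I_0^{-1}$ reproduce precisely the boundary torque and cancel it, giving $\mu\int_{\mathcal{S}} y\wedge\varphi\d y=0$ and hence the second identity.

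The routine part is the first (linear-momentum) identity; the main obstacle is the angular-momentum identity, since it hinges on two cancellations occurring simultaneously: the vanishing of the interior torque by symmetry of the strain tensor $D(\varphi)$, and the exact matching of the inertia operator $\int_{\mathcal{S}}(|y|^2\I_{\R^3}-y\otimes y)\d y$ with $\rho_{\mathcal{S}}^{-1}I_0$, which is precisely what forces the coefficient $\rho_{\mathcal{S}}I_0^{-1}$ in the definition of $F$. Once both identities are established, all three requirements of Definition \ref{deflincontrol} are met and $X^{\ast}_{\zeta}$ is admissible.
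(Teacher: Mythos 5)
Your proof is correct and follows essentially the same route as the paper's: reduce the three constraints to zero-mean identities for $\varphi$, then test the elliptic equation \eqref{Lame01} against the rigid fields $1$ and $y\wedge(\cdot)$, using the divergence theorem together with $\int_{\mathcal{S}}y\,\d y=0$, the symmetry of $D(\varphi)$, and the normalizations $M=\rho_{\mathcal{S}}|\mathcal{S}|$ and $I_0=\rho_{\mathcal{S}}\int_{\mathcal{S}}(|y|^2\I_{\R^3}-y\otimes y)\,\d y$. You are in fact more explicit than the paper about why the two boundary contributions cancel exactly.
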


\begin{proof}
The constraints imposed in Definition \ref{deflincontrol} are equivalent to the following ones expressed in term of $\varphi$:
\begin{eqnarray*}
\int_{\p \mathcal{S}} \varphi \cdot n \d \Gamma = 0, \quad \int_{\mathcal{S}} \varphi \d y = 0 , \quad
\int_{\mathcal{S}} y \wedge \varphi \d y = 0.
\end{eqnarray*}
Thus we have to verify that the mapping $\varphi$ solution of \eqref{Lame01}--\eqref{Lame04} satisfies these constraints. The first constraint, which corresponds to \eqref{const105}, is satisfied thanks to the compatibility condition assumed for $\zeta$. For the two other ones, let us remind that we have
\begin{eqnarray*}
\int_{\mathcal{S}} y \d y & = & 0,
\end{eqnarray*}
and since the tensor $D(\varphi)$ is symmetric, we also have
\begin{eqnarray*}
y \wedge \div D(\varphi) & = & \div\left(\mathbb{S}(y) D(\varphi)\right).
\end{eqnarray*}
Then Equation \eqref{Lame01} leads us to
\begin{eqnarray*}
\mu \int_{\mathcal{S}} \varphi \d y & = & 2\left(\int_{\p \mathcal{S}} D(\varphi) n \d \Gamma \right)
- 2\int_{\mathcal{S}}\div\left(D( \varphi) \right) \d y, \\
\mu \int_{\mathcal{S}} y \wedge \varphi \d y & = & 2\left(\int_{\p \mathcal{S}} y \wedge D(\varphi) n \d \Gamma \right)
- 2\int_{\mathcal{S}}\div\left(\mathbb{S}(y) D( \varphi ) \right) \d y, \\
\end{eqnarray*}
and thus by using the divergence formula we get the two other constraints.
\end{proof}

\subsection{Proof of Proposition \ref{lemmaxistence}} \label{lemma52}
Instead of solving directly system \eqref{Lame01}--\eqref{Lame04}, let us first consider a lifting of the nonhomogeneous Dirichlet condition. We set $\w$ the solution of the following Dirichlet problem
\begin{eqnarray*}
\div \ \w = 0 & & \text{in } \mathcal{S}, \\
\w = \zeta & & \text{on } \p \mathcal{S},
\end{eqnarray*}
with the classical estimates (see \cite{Galdi1}, the nonhomogeneous Dirichlet condition can be lifted by Theorem 3.4 of Chapter II, and the resolution made by using Exercise 3.4 and Theorem 3.2 of Chapter III):
\begin{eqnarray*}
\| \w \|_{\mathbf{H}^2(\mathcal{S})} & \leq & C\| \zeta \|_{\mathbf{H}^{3/2}(\p \mathcal{S})}, \\
\| \w \|_{\mathbf{H}^3(\mathcal{S})} & \leq & C\| \zeta \|_{\mathbf{H}^{5/2}(\p \mathcal{S})}, \\
\| \w_t \|_{\mathbf{H}^1(\mathcal{S})} & \leq & C\| \zeta_t \|_{\mathbf{H}^{1/2}(\p \mathcal{S})}.
\end{eqnarray*}
Then by setting $\phi = \varphi - \w$, we are interested in solving the following system
\begin{eqnarray*}
\mu \phi - 2\div \ D(\phi) = F(\phi) - \mu \w + \Delta \w + F(\w) & & \text{in } \mathcal{S},  \\
\phi =  0 & & \text{on } \p \mathcal{S},
\end{eqnarray*}
for some $\mu > 0$ large enough, in the space $\mathbf{H}^2(\mathcal{S})$ in a first time. A solution of this system can be obtained as a fixed point of the following mapping
\begin{eqnarray*}
\begin{array} {rrcl}
\mathbf{N} \ : \ & \mathbf{H}^2(\mathcal{S}) & \rightarrow & \mathbf{H}^2(\mathcal{S}) \\
& \psi & \mapsto & \phi,
\end{array}
\end{eqnarray*}
where $\phi$ is the solution of the classical elliptic system
\begin{eqnarray}
\mu \phi - 2\div \ D(\phi) = F(\psi) - \mu \w + 2\div \ D(\w) + F(\w) & & \text{in } \mathcal{S}, \label{eqlamex1} \\
\phi =  0 & & \text{on } \p \mathcal{S}.
\end{eqnarray}
For proving that this mapping is well-defined, let us give some preliminary estimates. The equality
\begin{eqnarray*}
2 D(\phi):D(\phi) - \nabla \phi : \nabla \phi & = &
\div \left( (\phi\cdot \nabla)\phi - (\div \ \phi) \phi \right) + \left( \div \ \phi \right)^2
\end{eqnarray*}
leads in $\mathbf{H}^1_0(\mathcal{S})$ to
\begin{eqnarray*}
\| \nabla \phi \|^2_{\mathbf{L}^2(\mathcal{S})} & \leq & 2 \| D (\phi) \|^2_{\mathbf{L}^2(\mathcal{S})},
\end{eqnarray*}
and then the Poincar\'e inequality provides a positive constant $C_p$ such that
\begin{eqnarray*}
\| \phi \|_{\mathbf{H}^1(\mathcal{S})} & \leq & C_p \| D(\phi) \|_{\mathbf{L}^2(\mathcal{S})}.
\end{eqnarray*}
We can estimate the norm $\mathbf{H}^2(\mathcal{S})$ as follows
\begin{eqnarray}
\| \phi \|^2_{\mathbf{H}^2(\mathcal{S})} & \leq &
C_1\left( \| \div D( \phi) \|^2_{\mathbf{L}^2(\mathcal{S})} + \| \phi \|^2_{\mathbf{H}^1(\mathcal{S})} \right), \nonumber \\
& \leq & C_1\left( \| \div D( \phi) \|^2_{\mathbf{L}^2(\mathcal{S})} + C_p^2\| D( \phi) \|^2_{\mathbf{L}^2(\mathcal{S})} \right). \label{refH2}
\end{eqnarray}
The trace of $D (\psi) $ on $\p \mathcal{S}$ which appears in the expression of $F(\psi)$ can be estimated as follows
\begin{eqnarray*}
\| D( \psi) n \|_{\mathbf{L}^2(\p \mathcal{S})}
& \leq &  C_2 \| \psi \|_{\mathbf{H}^{3/2+\varepsilon}(\mathcal{S})} , \\
& \leq &  C_2 \| \psi \|^{\alpha}_{\mathbf{H}^{2}(\mathcal{S})} \| \psi \|^{1-\alpha}_{\mathbf{H}^{1}(\mathcal{S})}, \\
& \leq &  C_2 C_p \| \psi \|^{\alpha}_{\mathbf{H}^{2}(\mathcal{S})} \| D(\psi) \|^{1-\alpha}_{\mathbf{L}^{2}(\mathcal{S})},
\end{eqnarray*}
with $\alpha = 1/2 + \varepsilon$, for some $\varepsilon >0$ which can be chosen small enough. Thus, by taking the inner product of the equality \eqref{eqlamex1} by $\div D( \phi)$, we obtain
\begin{eqnarray*}
\mu \| D(\phi) \|^2_{\mathbf{L}^{2}(\mathcal{S})} + 2\| \div D( \phi) \|^2_{\mathbf{L}^{2}(\mathcal{S})}
& \leq & C\left(\| F(\psi) \|_{\mathbf{L}^{2}(\mathcal{S})}
+ \| \w \|_{\mathbf{H}^{2}(\mathcal{S})} \right) \| \div D( \phi) \|_{\mathbf{L}^{2}(\mathcal{S})} \\
& \leq & \tilde{C}\left(\| D( \psi) n \|^2_{\mathbf{L}^{2}(\p \mathcal{S})} + \| \w \|^2_{\mathbf{H}^{2}(\mathcal{S})} \right)
 + \frac{1}{2} \| \div D( \phi) \|^2_{\mathbf{L}^{2}(\mathcal{S})}, \\
\mu \| D(\phi) \|^2_{\mathbf{L}^{2}(\mathcal{S})} + \frac{3}{2}\| \div D( \phi) \|^2_{\mathbf{L}^{2}(\mathcal{S})}
& \leq & C_3\left(\| \psi \|^{2\alpha}_{\mathbf{H}^{2}(\mathcal{S})} \| D(\psi) \|^{2-2\alpha}_{\mathbf{L}^{2}(\mathcal{S})}
+ \| \w \|^2_{\mathbf{H}^{2}(\mathcal{S})} \right).
\end{eqnarray*}
By using \eqref{refH2} in the left-hand-side, it gives
\begin{eqnarray*}
(2\mu - 3C_p^2)C_1 \| D(\phi) \|^2_{\mathbf{L}^{2}(\mathcal{S})} + 3\|  \phi \|^2_{\mathbf{H}^{2}(\mathcal{S})}
& \leq & 2C_1 C_3\left(\| \psi \|^{2\alpha}_{\mathbf{H}^{2}(\mathcal{S})} \| D(\psi) \|^{2-2\alpha}_{\mathbf{L}^{2}(\mathcal{S})}
+ \| \w \|^2_{\mathbf{H}^{2}(\mathcal{S})} \right).
\end{eqnarray*}
We now use the Young inequality on the right-hand-side, by introducing some $\delta > 0$ which can be chosen as small as desired, as follows
\begin{eqnarray*}
2 C_1 C_3 \| D(\psi) \|^{2-2\alpha}_{\mathbf{L}^{2}(\mathcal{S})} \| \psi \|^{2\alpha}_{\mathbf{H}^{2}(\mathcal{S})}
& \leq & \frac{\delta^p}{p}\| \psi \|^{2}_{\mathbf{H}^{2}(\mathcal{S})} +
\left(\frac{2C_1C_3}{\delta}\right)^q \frac{1}{q}  \| D(\psi) \|^{2}_{\mathbf{L}^{2}(\mathcal{S})}
\end{eqnarray*}
with $p = 2/(1+2\varepsilon)$ and $q = 2/(1-2\varepsilon)$. Then we have
\begin{eqnarray*}
(2\mu - 3C_p^2)C_1 \| D(\phi) \|^2_{\mathbf{L}^{2}(\mathcal{S})} + 3\|  \phi \|^2_{\mathbf{H}^{2}(\mathcal{S})} & \leq &
\frac{\delta^p}{p}\| \psi \|^{2}_{\mathbf{H}^{2}(\mathcal{S})} +
\left(\frac{2C_1C_3}{\delta}\right)^q \frac{1}{q}  \| D(\psi) \|^{2}_{\mathbf{L}^{2}(\mathcal{S})} \\
 & & + C \| \zeta \|_{\mathbf{H}^{3/2}(\p \mathcal{S})}.
\end{eqnarray*}
Thus, by setting
\begin{eqnarray*}
\mathbf{B}_R & = & \left\{  \phi \in \mathbf{H}^2(\mathcal{S}) \mid
(2\mu - 3C_p^2)C_1 \| D(\phi) \|^2_{\mathbf{L}^{2}(\mathcal{S})} + \|  \phi \|^2_{\mathbf{H}^{2}(\mathcal{S})} \leq R \right\},
\end{eqnarray*}
and by choosing $\delta >0$ small enough, and $\mu >0$ and $R>0$ large enough, we can see that the ball $\mathbf{B}_R$ is stable by the mapping $\mathbf{N}$. By the same inequalities we can see that $\mathbf{N}$ is a contraction in $\mathbf{B}_R$, and thus $\mathbf{N}$ admits a unique fixed point in $\mathbf{H}^2(\mathcal{S})$.\\
The same method can be applied in order to prove the regularity in $\mathbf{H}^3(\mathcal{S})$. Indeed, since we have the equality $\nabla (\div D(\phi)) = \div D(\nabla \phi)$, the gradient satisfies a similar equality, as follows
\begin{eqnarray*}
\mu \nabla \phi - 2\div D(\nabla \phi) = \nabla F(\phi) - \mu \nabla \w + 2\div D(\nabla \w) + \nabla F(\w) &  & \text{in } \mathcal{S},
\end{eqnarray*}
so that we can show that $\nabla \phi$ lies in $\mathbf{H}^2(\mathcal{S})$. Then we have the estimate
\begin{eqnarray*}
\| \phi \|_{\mathbf{H}^3(\mathcal{S})}  & \leq & C\| \w \|_{\mathbf{H}^3(\mathcal{S})} ,
\end{eqnarray*}
and since $\varphi =  \phi + \w$, we have
\begin{eqnarray*}
\| \varphi \|_{\mathbf{H}^3(\mathcal{S})}
& \leq & \tilde{C}\| \w \|_{\mathbf{H}^3(\mathcal{S})}  \leq \overline{C} \| \zeta \|_{\mathbf{H}^{5/2}(\p \mathcal{S})}, \\
\| \varphi \|_{\L^2(0,\infty;\mathbf{H}^3(\mathcal{S}))}
& \leq & \overline{C} \| \zeta \|_{\L^2(0,\infty;\mathbf{H}^{5/2}(\p \mathcal{S}))}.
\end{eqnarray*}
The estimate which deals with the time-derivative of $\phi$ can be obtained easily. Indeed, by taking the inner scalar product of the equality
\begin{eqnarray*}
\mu \phi_t - \Delta \phi_t & = & F(\phi_t) + \w_t
\end{eqnarray*}
by $\phi_t$, we notice that the contribution of the right-hand-side force vanishes, as follows
\begin{eqnarray*}
\int_{\mathcal{S}} F(\phi_t) \cdot \phi_t & = & 0,
\end{eqnarray*}
because $\phi_t$ satisfies the constraints
\begin{eqnarray*}
\int_{\mathcal{S}} \phi_t = 0, & \quad & \int_{\mathcal{S}}y \wedge \phi_t = 0.
\end{eqnarray*}
By this means we get easily
\begin{eqnarray*}
\| \varphi \|_{\H^1(0,\infty;\mathbf{H}^1(\mathcal{S}))}
& \leq & C \| \zeta \|_{\H^1(0,\infty;\mathbf{H}^{1/2}(\p \mathcal{S}))},
\end{eqnarray*}
\textcolor{black}{and thus the proof is complete.}

\medskip
Received xxxx 20xx; revised xxxx 20xx.
\medskip

\end{document}